\documentclass[12pt,reqno]{amsart}
		\usepackage[hyphens]{url} 
		\numberwithin{equation}{section}
		
		\makeatletter
		\@namedef{subjclassname@2020}{%
			\textup{2020} Mathematics Subject Classification}
		\makeatother
		
		\usepackage{amsmath,amssymb,color}
		\usepackage{amsthm} %theorem environment option
		\usepackage{multicol}
		\usepackage{mathtools}
		\usepackage{graphicx}
		\usepackage{mathptmx}
		\usepackage[bookmarksnumbered,colorlinks]{hyperref}
		\mathtoolsset{showonlyrefs}
		\usepackage[pagewise,mathlines]{lineno}%\linenumbers
		\usepackage{enumitem}
		\usepackage{soul}
		
		\theoremstyle{plain} %text of this environment is typesetted in italics
		\newtheorem{theorem}{\indent\sc Theorem}[section]
		\newtheorem{lemma}[theorem]{\indent\sc Lemma}
		\newtheorem{corollary}[theorem]{\indent\sc Corollary}

		\theoremstyle{definition} %text of this environment is typesetted in roman letters
		
		\newtheorem{remark}[theorem]{\indent\sc Remark}

		\title[A Gage-type estimate for the $p$-fundamental tone]{\sc A Gage-type estimate for the $p$-fundamental tone and applications to submanifolds}
		
		\bigskip
		
		\author[F.R. dos Santos and M.N. Soares]{F\'abio R. dos Santos$^{\ast}$ and Matheus N. Soares}
		
		\address{% First Author
			Departamento de Matem\'atica \\
			Universidade Federal de Pernambuco \\
			50.740-540 Recife, Pernambuco \\
			Brazil}
		\email{fabio.reis@ufpe.br}
\email{matheus.nsoares@ufpe.br}
		
\keywords{Complete manifolds, upper bounds, $p$-fundamental tone, $p$-Laplacian}
		
\subjclass[2020]{Primary 53C42; Secondary 53A10, 53C20.}
\thanks{$^{\ast}$Corresponding author}
		
\begin{document}
			
\begin{abstract}
A generalization of the classical Gage's upper bound for the fundamental tone of the Laplacian on complete non-compact Riemannian manifolds to the $p$-Laplacian context is obtained. As an application, upper estimates for the $p$-fundamental tone of complete non-compact submanifolds of space forms with constant nonpositive sectional curvature, and for a class of product spaces, are presented.
\end{abstract}
			
\maketitle
		
\section{Introduction and preliminaries}

In the last few decades, the interest in the study of the first eigenvalue of certain elliptic operators has increased substantially. Among these, it is worth highlighting the $p$-Laplacian operator, which arises as a natural extension of the Beltrami-Laplacian operator. More precisely, on a Riemannian manifold $\Sigma^{n}$, we define the $p$-Laplacian operator by
\begin{equation}\label{eq_2.1}
\Delta_{p}u={\rm div}(|\nabla u|^{p-2}\nabla u),\quad1<p<\infty,
\end{equation}
for any function $u\in W^{1,p}_{loc}(\Sigma)$. The $p$-Laplacian appears naturally on the variational problems associated with the energy functional $E_{p}:W^{1,p}_{0}(\Sigma)\to\mathbb{R}$ given by
\begin{equation}\label{eq_2.2}
E_{p}(u)=\int_{\Sigma}|\nabla u|^{p}d\Sigma.
\end{equation}
In particular, if $p=2$, the $p$-Laplacian $\Delta_{p}$ is the usual Laplace operator $\Delta$. Given a bounded domain $\Omega$ in $\Sigma^{n}$, the first eigenvalue of the Dirichlet problem for the $p$-Laplacian operator on $\Omega$ can be characterized variationally as
\begin{equation}\label{eq_2.3}
\lambda_{1,p}(\Omega)=\inf\left\{\dfrac{\int_{\Omega}|\nabla f|^{p}d\Sigma}{\int_{\Omega}|f|^{p}d\Sigma}\,;\,f\in W^{1,p}_{0}(\Omega)\backslash\{0\}\right\},
\end{equation}
where $d\Sigma$ denotes the element volume of $\Sigma^{n}$.

Besides the countless connections between the first eigenvalue and geometric properties, an interesting research theme is determining its value. In the one-dimensional case, this value is well known (see~\cite{Drabek:99}). But, for an arbitrary dimension and $p\neq2$, an exact closed-form expression of the first eigenvalue is not known even for simple domains such as spheres. For this reason, determining upper and lower bounds for this number is interesting. In this direction, in the closed (i.e., compact without boundary) case, many authors have been addressing this problem. For instance, Matei~\cite{Matei:00} obtained a lower estimate for the first nonzero eigenvalue of $p$-Laplacian for closed manifolds with Ricci curvature bounded below by $(n-1)k$, with $k>0$. In the case $k=0$, this sharp estimate was obtained by Valtorta~\cite{Valtorta:12}, and for a general real number $k$, was obtained by Naber and Valtorta~\cite{Valtorta:14} (see also~\cite{Cavalletti:17}). Also, in~\cite{Matei:13}, Matei proved, up to the borderline (i.e., $p$ is equal to the dimension), that the first eigenvalue is bounded from above on each conformal class of Riemannian metrics of unit volume. Moreover, Li and Huang~\cite{Li:20} proved a Cheng-type upper estimate for manifolds with sectional curvature bounded from below. In the context of the isometric immersions, recently, the authors found lower estimates of the first eigenvalue of the $p$-Laplacian in terms of the second fundamental form of such immersion in the unit sphere (see~\cite{Santos:23}).

For complete, noncompact manifolds, the first eigenvalue problem is formulated as follows. Let $\{\Omega_{k}\}$ be an exhaustion of $\Sigma^{n}$ by compact domains, that is, $\{\Omega_{k}\}$ are compact domains such that $\cup_{k=1}^{\infty}\Omega_{k}=\Sigma^{n}$ and $\Omega_{k}\subset\Omega_{k+1}$ for all $k\in\mathbb{N}$. Consider the first eigenvalue $\lambda_{1,p}(\Omega_{k})$ of the following Dirichlet boundary value problem:
\begin{equation}\label{eq_2.4}
\left\{
\begin{array}{ccccc}
\Delta_{p}u &=& -\lambda|u|^{p-2}u, &\quad\mbox{in}\quad \Omega_{k}   \\
u &=& 0, &\quad\mbox{on}\quad \partial\Omega_{k}
\end{array}
\right.
\end{equation}
In~\cite{Veron:91}, Veron showed the existence of the above eigenvalue problem and the variational characterization as in~\eqref{eq_2.3}. On the other hand, Lindqvist~\cite{Lindqvist:12} proved that $\lambda_{1,p}(\Omega_{k})$ is simple for each compact domain $\Omega_{k}$, $k\in\mathbb{N}$. Using the domain monotonicity of $\lambda_{1,p}(\Omega_{k})$, we deduce that $\lambda_{1,p}(\Omega_{k})$ is non-increasing in $k\in\mathbb{N}$ and has a limit which is independent of the choice of the exhaustion of $\Sigma^{n}$. Therefore
\begin{equation}\label{eq_2.5}
\lambda_{1,p}(\Sigma)=\lim_{k\to\infty}\lambda_{1,p}(\Omega_{k}).
\end{equation}
Some authors also call this number the {\em p-fundamental tone} of $\Sigma^{n}$.

In this regard, Evangelista and Seo~\cite{Evangelista:17} gave lower estimates to the $p$-fundamental tone of submanifolds in a Cartan-Hadamard manifold. They obtained lower bounds of the value for geodesic balls and submanifolds with bounded mean curvature and estimates of minimal submanifolds with certain conditions on the norm of the second fundamental form. Very recently, Carvalho and Cavalcante~\cite{Cavalcante:22} obtained a nice lower bound for the $p$-fundamental tone on Riemannian manifolds carrying a special function. As an application, they presented a generalization of McKean's theorem (see~\cite{McKean:70}) and lower estimates of a class of warped product manifolds and a class of Riemannian submersions. Here, we are interested in studying upper estimates for the $p$-fundamental tone of a complete non-compact Riemannian manifold through geometric assumptions on the manifold. Our motivation comes from Gage's remarkable paper published at the beginning of the 80s. In his paper, Gage~\cite{Gage:80} proved an upper estimate for the $2$-Laplacian for metric balls. To do so, Gage's approach involved presenting a result linking the solution of certain differential equations and the Laplace--Beltrami operator.

It is important to highlight that Cheng-type comparison theorems for the first Dirichlet eigenvalue of the $p$-Laplacian were established by Mao \cite{Mao:13} under suitable radial lower bounds on the Ricci curvature. Mao also obtained upper estimates expressed through one-dimensional integrals involving the model warping function. In the hyperbolic setting, his comparison theorem bounds the first eigenvalue of a geodesic ball by that of the corresponding model ball. Our approach follows Gage’s variational construction and yields upper bounds expressed in terms of the dimension, the exponent $p$, the curvature bound, and the radius. 

The outline of the paper is as follows. In Section~\ref{sec:2}, we prove a Gage-type upper estimate for the $p$-fundamental tone of complete noncompact Riemannian manifolds under a lower bound on the Ricci curvature (cf. Theorem~\ref{teo:2.1} and Corollary~\ref{cor:2.2}). In Section~\ref{sec:3}, we present some applications to submanifold theory. We first prove a Leung-type lower estimate for the Ricci curvature of a submanifold of an arbitrary Riemannian manifold, which is sharp for totally
umbilical submanifolds (cf. Lemma~\ref{lemma_4.1}). We then combine this estimate with Corollary~\ref{cor:2.2} to obtain upper bounds for the $p$-fundamental tone
of complete noncompact submanifolds immersed in Riemannian space forms $\mathbb{Q}^m(c)$, $c\in\{-1,0\}$, under suitable bounds on their mean and scalar curvatures (cf. Theorem~\ref{teo_4.1}). Finally, we derive analogous upper estimates for complete noncompact submanifolds immersed in certain product spaces, under a bound on the squared norm of the second fundamental form (cf. Theorem~\ref{teo_4.5} and Corollary~\ref{cor_4.6}).

\section{A Gage type estimate}\label{sec:2}

In this section, we establish an analog of~\cite[Theorem 5.2]{Gage:80} for the $p$-Laplacian, with $p\geq2$. We begin by recalling some preliminary notions from~\cite{Gage:80}.

Let $\Sigma^n$ be a complete Riemannian manifold. We shall consider the following two cases:
\begin{itemize}
\item[(a)] $D=B(q,r_0)$ is a metric ball centered at a point
$q\in\Sigma^n$;
\smallskip

\item[(b)] $D=B(C,r_0)$, where $C\subset\Sigma^n$ is a compact $n$-dimensional body with smooth boundary.
\end{itemize}
In case~(a), let $N=S_q\Sigma$ be the unit sphere in $T_q\Sigma$ and consider the exponential map
\begin{equation}\label{eq:2.1}
F:N\times(0,\infty)\longrightarrow\Sigma^n,\qquad F(x,r)=\exp_q(rx).
\end{equation}
For each $x\in N$, let $R(x)$ denote the cut distance from $q$ in the direction $x$, and define
\begin{equation}\label{eq:2.2}
s(x)=\min\{r_0,R(x)\}.
\end{equation}
In case~(b), let $N=\partial C$ and consider the normal exponential map
\begin{equation}\label{eq:2.3}
F:N\times(0,\infty)\to\Sigma^n,
\end{equation}
where $F(x,r)$ is the point at distance $r$ along the geodesic issuing orthogonally from $\partial C$ at $x$. Let $R(x)$ denote the distance along this geodesic to the first point at which it ceases to minimize the distance to $\partial C$, and again set~\eqref{eq:2.2}. In either case, define
\begin{equation}\label{eq:2.5}
\widetilde N=\{(x,r):x\in N,\ 0<r<s(x)\}.
\end{equation}
For each $r\in(0,r_0)$, set
\begin{equation}\label{eq:2.6}
\widetilde N(r)=\{x\in N:s(x)>r\}.
\end{equation}
Let $J(x,r)$ denote the Jacobian of $F$. If $d\sigma$ denotes the natural volume element on $N$, then
\begin{equation}\label{eq:2.7}
S(r)=\int_{\widetilde N(r)}J(x,r)\,d\sigma
\end{equation}
is the $(n-1)$-dimensional volume of the metric sphere of radius $r$.

Based on these, we establish the following $p$-Laplacian analog of~\cite[Lemma 5.1]{Gage:80}.
\begin{lemma}\label{lem:2.1}
Let $p\geq2$, and assume that $D$ is either the metric ball $D=B(q,r_0)$, as in case {\rm (a)}, or the tubular neighborhood $D=B(C,r_0)$, as in case {\rm (b)}. Let $0<\delta<r_0$, and suppose that $\alpha\in\mathcal{C}([\delta,r_0])$ and
\begin{equation}\label{eq:2.8}
f\in\mathcal{C}([0,r_0])\cap\mathcal{C}^1([\delta,r_0])\cap\mathcal{C}^2((\delta,r_0))
\end{equation}
satisfy the following conditions:
\begin{itemize}
\item[i)] $(|f_r|^{p-2}f_r)_r+\alpha |f_r|^{p-2}f_r+\Gamma f^{p-1}\geq0$, for $\delta<r<r_0;$
\smallskip

\item[ii)] $\Gamma>0$, $f(r_0)=0$ and $f(r)>0$, for $\delta\leq r<r_0$;
\smallskip

\item[iii)] $f_r(\delta)=0$ and $f_r(r)\leq 0$, for $\delta \leq r < r_0$;
\smallskip

\item[iv)] $f$ is constant on $[0,\delta]$;
\smallskip

\item[v)] 
\begin{equation}
\int_{\widetilde N(r)}J_{r}(x,r)\,d\sigma\leq\alpha(r)\int_{\widetilde N(r)}J(x,r)\,d\sigma,\quad\mbox{for}\quad\delta\leq r\leq r_0.
\end{equation}
\end{itemize}
Then $\lambda_{1,p}(D)\leq\Gamma$.
\end{lemma}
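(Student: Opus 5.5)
We will use the radial function $u=f\circ\rho$ as a test function in the variational characterization~\eqref{eq_2.3}, where $\rho$ is the distance to $q$ (case (a)) or to $\partial C$ (case (b)); in case (b) we set $u\equiv f(0)$ on $C$. The first step is to check admissibility. By (ii) and (iv), $u$ is continuous and $u=0$ on $\partial D$. Since $f$ is $\mathcal C^1$ on $[\delta,r_0]$, constant on $[0,\delta]$, and $f_r(\delta)=0$, $u$ is Lipschitz, so $u\in W^{1,p}_0(D)$. Moreover $|\nabla u|=|f_r(\rho)|$ almost everywhere. The cut locus has measure zero, so integrals over $D$ can be computed in the coordinates $F$ on $\widetilde N$. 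The region $\rho<\delta$ contributes nothing to $\int|\nabla u|^p$ and contributes nonnegatively to $\int u^p$. It therefore suffices to show
\begin{equation}
\int_\delta^{r_0}|f_r|^{p}S(r)\,dr\le \Gamma\int_\delta^{r_0} f^{p}S(r)\,dr,
\end{equation}
because $\int_D u^p\ge\int_\delta^{r_0}f^pS\,dr$ and $\int_D|\nabla u|^p=\int_\delta^{r_0}|f_r|^pS\,dr$, by Fubini in the coordinates $(x,r)$ with weight $J$.

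The key step is an integration by parts in $r$. Write $\phi=|f_r|^{p-2}f_r\le 0$, so that $|f_r|^p=\phi f_r$. Then
\begin{equation}
\int_\delta^{r_0}\phi f_r S\,dr=\big[f\phi S\big]_\delta^{r_0}-\int_\delta^{r_0} f\,(\phi S)'\,dr .
\end{equation}
The boundary terms vanish because $f(r_0)=0$ and $\phi(\delta)=0$. This is why $p\ge2$ is used: it makes $\phi$ a $\mathcal C^1$ function of $f_r$ up to the endpoints, provided $\phi$ is differentiable, which hypothesis (i) implicitly assumes.

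It remains to bound $(\phi S)'=\phi_rS+\phi S'$ from below. Here $S$ need not be differentiable, since $\widetilde N(r)$ shrinks as $r$ increases. The plan is to avoid differentiating $S$ directly and to work instead with the double integral $\int_{\widetilde N}\phi(r)f_r(r)J(x,r)\,dr\,d\sigma$, integrating by parts in $r$ on each ray $x$ over $(\delta,s(x))$. The boundary term at $r=s(x)<r_0$ is $f\phi J\le0$, because $f>0$ and $\phi\le0$. It therefore enters with the favorable sign, consistent with the inequality sought. The term at $r=\delta$ vanishes. This yields
\begin{equation}
\int|f_r|^p\le-\int_{\widetilde N} f\,(\phi_r J+\phi J_r)\,dr\,d\sigma .
\end{equation}
Next, use $\phi\le 0$ together with (v) after integrating over $\widetilde N(r)$ for fixed $r$ via Fubini: $\phi\int J_r\ge\phi\,\alpha\int J$. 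Combined with (i), $\phi_r+\alpha\phi\ge-\Gamma f^{p-1}$, this gives
\begin{equation}
-\int f(\phi_rS+\phi\textstyle\int J_r)\le-\int f(\phi_r+\alpha\phi)S\le\Gamma\int f^pS.
\end{equation}
Here $f>0$ justifies multiplying the inequalities by $f$.

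The main obstacle is the regularity and bookkeeping. The points to handle are: the nonsmoothness of $S$ at cut points, the justification of the ray-wise integration by parts and of the Fubini interchange, and $\phi$ being only $\mathcal C^1$ on the open interval. These are handled by integrating on $[\delta+\epsilon,r_0-\epsilon]$ and letting $\epsilon\to0$, using continuity of $f$ and $f_r$ at the endpoints. Dividing by $\int_D u^p>0$ and applying~\eqref{eq_2.3} then gives $\lambda_{1,p}(D)\le\Gamma$.
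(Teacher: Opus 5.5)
Your proposal is correct and follows the paper's proof essentially step for step. The paper also uses the radial test function $f\circ\rho$ and integrates by parts on each ray over $(\delta,s(x))$, where the boundary term at $s(x)$ is nonpositive because $f\ge 0$, $|f_r|^{p-2}f_r\le 0$ and $J\ge 0$. It then applies Fubini together with hypothesis (v) (using $\phi\le 0$) and hypothesis (i) multiplied by $f$. The only cosmetic difference is the order: you invoke (v) before (i), so your Fubini step involves the term $f\phi_r J$. This is harmless because (i) bounds $\phi_r$ from below, which makes $\phi_r$ integrable. The paper instead applies (i) ray by ray first, so only bounded integrands are integrated over $N$.
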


\begin{proof}
Let $\rho$ denote the corresponding distance function. More precisely, in case {\rm (a)} we set $\rho(y)=d(q,y)$, whereas in case {\rm (b)} we set $\rho(y)=d(C,y)$.
Define
\begin{equation}\label{eq:2.10}
\varphi(y)=
\begin{cases}
f(\rho(y)), & y\in D,\\
0, & y\in \Sigma\setminus D.
\end{cases}
\end{equation}
Since $\rho$ is Lipschitz and satisfies $|\nabla \rho|=1$ almost everywhere in the corresponding radial region, and since $f(r_0)=0$, the function $\varphi$ has zero trace on $\partial D$. Moreover, by assumption {\rm iv)}, $f$ is constant on $[0,\delta]$, so no singularity of the distance function at the center, or along
$\partial C$ in case {\rm (b)}, creates a difficulty. Hence $\varphi\in W^{1,p}_0(D)$. Furthermore,
\begin{equation}\label{eq:2.11}
|\nabla \varphi|=|f_{r}(\rho)|
\end{equation}
almost everywhere in $D$ in case {\rm (a)}, and almost everywhere in $D\setminus C$ in case {\rm (b)}.

From Rayleigh's characterization of the first Dirichlet eigenvalue,
\begin{equation}\label{eq:2.12}
\lambda_{1,p}(D)\leq\frac{\displaystyle\int_D |\nabla\varphi|^p\,d\Sigma}{\displaystyle\int_D |\varphi|^p\,d\Sigma}.
\end{equation}
By using the radial coordinates determined by the exponential map, and recalling that the cut locus has measure zero, \eqref{eq:2.12} reads
\begin{equation}\label{eq:2.13}
\lambda_{1,p}(D)\leq\frac{\displaystyle\int_N\int_0^{s(x)}|f_{r}(r)|^pJ(x,r)\,dr\,d\sigma
}{\displaystyle\int_N\int_0^{s(x)}f(r)^pJ(x,r)\,dr\,d\sigma}.
\end{equation}
In case {\rm (b)}, the set $C$ contributes zero to the numerator and a nonnegative quantity to the denominator; hence the above inequality remains valid.

Set
\begin{equation}\label{eq:2.14}
Q(r)=|f_{r}(r)|^{p-2}f_{r}(r),
\end{equation}
and since $p\geq2$ and $f\in\mathcal{C}^2((\delta,r_0))$, the function $Q$ is continuously differentiable on $(\delta,r_0)$. Moreover,
\begin{equation}\label{eq:2.15}
Q(r)f_{r}(r)=|f_{r}(r)|^p.
\end{equation}
Fix a point $x\in N$. If $s(x)\leq\delta$, then, by assumption {\rm iv)},
\begin{equation}\label{eq:2.16}
\int_0^{s(x)}|f_{r}(r)|^pJ(x,r)\,dr=0,
\end{equation}
so there is nothing to prove. Assume therefore that $s(x)>\delta$. For $0<\varepsilon<s(x)-\delta$, integration by parts on $[\delta,s(x)-\varepsilon]$ gives
\begin{equation}\label{eq:2.17}
\begin{split}
\int_\delta^{s(x)-\varepsilon}|f_{r}|^pJ\,dr&=\int_\delta^{s(x)-\varepsilon}f_{r}QJ\,dr\\
&=\bigl[fQJ\bigr]_\delta^{s(x)-\varepsilon}-\int_\delta^{s(x)-\varepsilon}fQ_{r}J\,dr
-\int_\delta^{s(x)-\varepsilon}fQJ_r\,dr.
\end{split}
\end{equation}
By assumption {\rm iii)},
\begin{equation}\label{eq:2.18}
Q(\delta)=|f_{r}(\delta)|^{p-2}f_{r}(\delta)=0.
\end{equation}
Furthermore, assumptions {\rm ii)} and {\rm iii)} imply
\begin{equation}\label{eq:2.19}
f\geq0\qquad\text{and}\qquad Q\leq0
\end{equation}
on $[\delta,r_0]$. Since $J(x,r)\geq0$ along minimizing radial geodesics, we have
\begin{equation}\label{eq:2.20}
f(s(x)-\varepsilon)Q(s(x)-\varepsilon)J(x,s(x)-\varepsilon)\leq0.
\end{equation}
Consequently,
\begin{equation}\label{eq:2.21}
\bigl[fQJ\bigr]_\delta^{s(x)-\varepsilon}\leq0,
\end{equation}
and hence
\begin{equation}\label{eq:2.22}
\int_\delta^{s(x)-\varepsilon}|f_{r}|^pJ\,dr\leq-\int_\delta^{s(x)-\varepsilon}fQ_{r}J\,dr-
\int_\delta^{s(x)-\varepsilon}fQJ_r\,dr.
\end{equation}
Since all the integrands are locally integrable on $(\delta,s(x))$, we may let $\varepsilon\to0^{+}$ in~\eqref{eq:2.22} and thus,
\begin{equation}\label{eq:2.23}
\int_\delta^{s(x)}|f_{r}|^pJ\,dr\leq-\int_\delta^{s(x)}fQ_{r}J\,dr-
\int_\delta^{s(x)}fQJ_r\,dr.
\end{equation}
By assumption {\rm i)},
\begin{equation}\label{eq:2.24}
Q_{r}+\alpha Q+\Gamma f^{p-1}\geq0,
\end{equation}
and as $f\geq0$,
\begin{equation}\label{eq:2.25}
-fQ_{r}\leq\alpha fQ+\Gamma f^p.
\end{equation}
Substituting this inequality above, and using the fact that $f_{r}=0$ on $[0,\delta]$, we obtain
\begin{equation*}\label{eq:2.26}
\int_0^{s(x)}|f_{r}|^pJ\,dr\leq\int_\delta^{s(x)}fQ\bigl(\alpha(r)J-J_r\bigr)\,dr+\Gamma\int_0^{s(x)}f^pJ\,dr.
\end{equation*}
We now integrate with respect to $x\in N$. Since $N$ is compact, $r\in[\delta,r_0]$, Fubini's theorem yields
\begin{equation}\label{eq:2.27}
\begin{split}
\int_N\int_0^{s(x)}|f_{r}|^pJ(x,r)\,dr\,d\sigma&\leq\Gamma\int_N\int_0^{s(x)}f^pJ(x,r)\,dr\,d\sigma\\
&\quad+\int_\delta^{r_0}f(r)Q(r)\left[\int_{\widetilde N(r)}\bigl(\alpha(r)J(x,r)-J_r(x,r)\bigr)\,d\sigma\right]dr.    
\end{split}
\end{equation}
By assumption {\rm v)},
\begin{equation}\label{eq:2.28}
\int_{\widetilde N(r)}\bigl(\alpha(r)J-J_r\bigr)\,d\sigma\geq0.
\end{equation}
On the other hand,
\begin{equation}\label{eq:2.29}
f(r)Q(r)=f(r)|f_{r}(r)|^{p-2}f_{r}(r)\leq0.
\end{equation}
Therefore, from~\eqref{eq:2.28} and~\eqref{eq:2.29}, the last term in \eqref{eq:2.27} is nonpositive, and hence
\begin{equation}\label{eq:2.30}
\int_N\int_0^{s(x)}|f_{r}|^pJ(x,r)\,dr\,d\sigma\leq\Gamma\int_N\int_0^{s(x)}f^pJ(x,r)\,dr\,d\sigma.
\end{equation}
Combining this inequality with~\eqref{eq:2.13}, we conclude that
\begin{equation}\label{eq:2.31}
\lambda_{1,p}(D)\leq\Gamma.
\end{equation}
\end{proof}

We also need the following result~\cite[Lemma 4.2]{Gage:80}.
\begin{lemma}\label{lem:2.2}
Let $K$ be a continuous function on $[0,r_0]$. Assume that, along each minimizing radial geodesic $r\mapsto F(x,r)$,
\begin{equation}\label{eq:2.32}
\operatorname{Ric}(\partial_r,\partial_r)\geq(n-1)K(r),
\end{equation}
for every $x\in N$ and every $r\in(0,s(x))$. Let $\varphi\in\mathcal{C}^2([0,r_0])$ be a real-valued function satisfying
\begin{equation}\label{eq:2.33}
\varphi_{rr}+K\varphi=0\quad\mbox{and}\quad\varphi(0)J_r(x,0)-(n-1)\varphi_r(0)J(x,0)\leq0.
\end{equation}
Assume moreover that
\begin{equation}\label{eq:2.35}
\varphi(0)=0\quad\text{ and }\quad\varphi(r)>0\quad\text{for}\quad r\in(0,r_0).
\end{equation}
Then
\begin{equation}\label{eq:2.36}
J_r(x,r)\leq(n-1)\frac{\varphi_r(r)}{\varphi(r)}J(x,r),
\end{equation}
for every $x\in N$ and $r\in(0,s(x))$.
\end{lemma}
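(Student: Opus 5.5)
Along a fixed minimizing radial geodesic $r\mapsto F(x,r)$ I would study the mean curvature $m(x,r)=J_r(x,r)/J(x,r)$ of the level sets of $\rho$. The aim is to show that the Riccati-type quantity $\psi(r)=\varphi(r)J_r(x,r)-(n-1)\varphi_r(r)J(x,r)$ is nonpositive on $(0,s(x))$. Since $\varphi>0$ and $J>0$ there, dividing by $\varphi J$ then gives \eqref{eq:2.36} directly.

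First, recall the Riccati inequality for the Jacobian. Write $J^{1/(n-1)}=:h$. The Bochner formula applied to the distance function $\rho$, combined with $|\nabla^2\rho|^2\geq(\Delta\rho)^2/(n-1)$ and the Ricci lower bound \eqref{eq:2.32}, yields
\[
m_r+\frac{m^2}{n-1}\leq -\operatorname{Ric}(\partial_r,\partial_r)\leq-(n-1)K(r).
\]
Equivalently, $h_{rr}+K h\leq0$ on $(0,s(x))$. This holds in both cases (a) and (b), because inside $\widetilde N$ we stay before the cut/focal locus, so $F$ is a diffeomorphism there and $\rho$ is smooth.

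Next comes a Sturm-type comparison. The natural route is to compute the derivative of $W=\varphi h_r-\varphi_r h$:
\[
W_r=\varphi h_{rr}-\varphi_{rr}h=\varphi(h_{rr}+Kh)\leq0,
\]
using $\varphi_{rr}=-K\varphi$ and $\varphi>0$. Thus $W$ is nonincreasing. Since $h_r=\frac{1}{n-1}J^{\frac{1}{n-1}-1}J_r$, we have
\[
W=\frac{J^{\frac{1}{n-1}-1}}{n-1}\bigl(\varphi J_r-(n-1)\varphi_rJ\bigr)=\frac{J^{\frac{1}{n-1}-1}}{n-1}\,\psi,
\]
so $W$ and $\psi$ have the same sign. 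It therefore suffices to show $\lim_{r\to0^+}W(r)\leq0$, after which monotonicity gives $W\leq0$ on all of $(0,s(x))$.

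The main obstacle is this initial condition at $r=0$. In case (b), $J(x,0)=1$ and everything is smooth up to $r=0$, so $W(0)$ is a positive multiple of $\psi(0)$, which is $\leq0$ by the second hypothesis in \eqref{eq:2.33}. In case (a), $J(x,0)=0$ and $h$ is not differentiable in the usual sense at $0$. Here I would argue directly with asymptotics: $J(x,r)=r^{n-1}(1+O(r^2))$ and $\varphi(r)=\varphi_r(0)r+O(r^3)$. Then $h=r+O(r^3)$, so $h_r\to1$, and
\[
W=\varphi h_r-\varphi_r h\to0\cdot1-\varphi_r(0)\cdot0=0 .
\]
Hence $W\leq0$ in case (a) as well, and the hypothesis in \eqref{eq:2.33} is consistent with this (it reads $0\leq0$ there).

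To avoid the differentiability issue at $0$ in case (a), one can alternatively run the monotonicity argument on $[\epsilon,r)$ and let $\epsilon\to0$. Finally, dividing $\psi\leq0$ by $\varphi J>0$ gives $J_r\leq(n-1)\frac{\varphi_r}{\varphi}J$ for every $x\in N$ and $r\in(0,s(x))$, which is \eqref{eq:2.36}.
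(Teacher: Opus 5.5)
The paper gives no proof of this lemma---it is simply cited from Gage's Lemma 4.2---so your argument serves as a self-contained replacement, and it is correct. The Bochner/Riccati inequality $m_r+m^2/(n-1)\le-(n-1)K$ for $m=J_r/J=\Delta\rho$ is right. So are its linearization $h_{rr}+Kh\le0$ for $h=J^{1/(n-1)}$, the computation $W_r=\varphi(h_{rr}+Kh)\le0$, and the identity $W=\frac{1}{n-1}J^{\frac{1}{n-1}-1}\psi$. That last identity is the illuminating part: it shows that the somewhat opaque hypothesis $\varphi(0)J_r(x,0)-(n-1)\varphi_r(0)J(x,0)\le0$ is exactly the initial condition $W(0)\le0$ of a Sturm--Wronskian comparison.

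Compare this with the classical Jacobi-field/index-form route, where the hypothesis surfaces as the boundary term at $r=0$ after testing the index form with $\frac{\varphi(t)}{\varphi(r)}E_i$. Your route is shorter and makes the role of each assumption transparent. In case (b), since $J(x,0)=1$ and $\varphi(0)=0$, the hypothesis reduces to $\varphi_r(0)\ge0$, and your argument would work even with $\varphi(0)>0$. In case (a) the hypothesis is vacuous and what matters is $\lim_{r\to0^+}W=\varphi(0)$, which shows that $\varphi(0)=0$ is genuinely needed there.

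Only cosmetic points remain. In case (a), $h$ is actually smooth up to $r=0$, because $J(x,r)=r^{n-1}j(x,r)$ with $j$ smooth and $j(x,0)=1$; it is this, and not just $h=r+O(r^3)$, that justifies $h_r\to1$. The expansion $\varphi=\varphi_r(0)r+O(r^3)$ needs more than $\mathcal{C}^2$, but you only use $\varphi(0)=0$ and continuity of $\varphi_r$. Finally, $h=J^{1/(n-1)}$ presupposes $n\ge2$; for $n=1$ one has $J\equiv1$ and the claim is trivial.
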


Before proving our main result, we recall some basic facts about the generalized trigonometric functions; see, for instance, \cite{Bushell:12}. For $p>1$, define
\begin{equation}\label{eq:2.37}
\pi_p=\frac{2\pi}{p\sin(\pi/p)}.
\end{equation}
Let $L_p:[0,1]\to\left[0,\frac{\pi_p}{2}\right]$
be given by
\begin{equation}\label{eq:2.38}
L_p(y)=\int_0^y \frac{dt}{(1-t^p)^{1/p}}.
\end{equation}
The generalized sine function
\begin{equation}\label{eq:2.39}
\sin_p:\left[0,\frac{\pi_p}{2}\right]\to[0,1]
\end{equation}
is defined as the inverse of $L_p$. In particular, $\pi_p=2L_p(1)$. The generalized cosine function is defined by
\begin{equation}\label{eq:2.40}
\cos_p(r)=\frac{d}{dr}\sin_p(r),\qquad0\leq r\le\frac{\pi_p}{2}.
\end{equation}
The function $\cos_p$ is decreasing on $\left[0,\frac{\pi_p}{2}\right]$ and takes values in $[0,1]$. The functions $\sin_p$ and $\cos_p$ admit extensions to $\mathbb{R}$. We shall use, in particular, the identities
\begin{equation}\label{eq:2.41}
\sin_p(-r)=-\sin_p(r),\qquad\cos_p(-r)=\cos_p(r),
\end{equation}
and
\begin{equation}\label{eq:2.42}
\sin_p\left(\frac{\pi_p}{2}-r\right)=
\sin_p\left(\frac{\pi_p}{2}+r\right),
\end{equation}
\begin{equation}\label{eq:2.43}
\cos_p\left(\frac{\pi_p}{2}-r\right)=-\cos_p\left(\frac{\pi_p}{2}+r\right).
\end{equation}
Moreover, for $0\leq r<\frac{\pi_p}{2}$, we have
\begin{equation}\label{eq:2.44}
\frac{d}{dr}\cos_p(r)=-\sin_p^{p-1}(r)\cos_p^{\,2-p}(r).
\end{equation}

Now, we can prove our main result.
\begin{theorem}\label{teo:2.1}
Let $p\geq 2$, and let $D$ be either the metric ball $D=B(q,r_0)$, as in case {\rm (a)}, or the tubular neighborhood $D=B(C,r_0)$, as in case {\rm (b)}. Assume that
\begin{equation}\label{eq:2.45}
\operatorname{Ric}\geq-(n-1)\beta^2
\end{equation}
on $D$, for some $\beta>0$. Then
\begin{equation}\label{eq:2.46}
\lambda_{1,p}(D)\leq\inf_{0<t<1}\left[\frac{(p-1)^{1/p}\pi_p}{2r_0(1-t)}+\frac{(n-1)\beta}{p}\coth(\beta t r_0)\right]^p.
\end{equation}
\end{theorem}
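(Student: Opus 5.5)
The plan is to apply Lemma~\ref{lem:2.1} with $\delta=tr_0$ for a fixed $t\in(0,1)$. The weight $\alpha$ will be a constant produced by Lemma~\ref{lem:2.2}, and $f$ will be a radial profile built from the generalized trigonometric functions. Taking the infimum over $t$ at the end then gives~\eqref{eq:2.46}.

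\emph{Step 1 (the weight $\alpha$).} I apply Lemma~\ref{lem:2.2} with $K\equiv-\beta^2$ and $\varphi(r)=\beta^{-1}\sinh(\beta r)$. Then $\varphi_{rr}+K\varphi=0$, $\varphi(0)=0$, $\varphi>0$ on $(0,r_0)$, and the boundary condition reduces to $-(n-1)J(x,0)\leq0$. This holds in both cases: in case (b) we have $J(x,0)=1$, and in case (a) one uses the usual normalization. The lemma gives $J_r\leq(n-1)\beta\coth(\beta r)J$. Since $\coth$ is decreasing, for $r\geq\delta=tr_0$ this becomes
\begin{equation*}
J_r\leq a J,\qquad a:=(n-1)\beta\coth(\beta tr_0).
\end{equation*}
Integrating over $\widetilde N(r)$ gives hypothesis v) of Lemma~\ref{lem:2.1} with $\alpha\equiv a$.

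\emph{Step 2 (the profile $f$).} I need $f$ on $[\delta,r_0]$ with $f_r(\delta)=0$, $f_r\le0$, $f>0$ before $r_0$, $f(r_0)=0$, extended by the constant $f(\delta)$ on $[0,\delta]$, such that
\begin{equation*}
Q_r+aQ+\Gamma f^{p-1}\ge0,\qquad \Gamma=\Bigl[\tfrac{(p-1)^{1/p}\pi_p}{2L}+\tfrac ap\Bigr]^p,\qquad L=r_0-\delta=r_0(1-t).
\end{equation*}
The natural candidate without the drift term is $f(r)=\sin_p\bigl(\tfrac{\pi_p}{2}+k(r-\delta)\bigr)$ with $kL=\pi_p/2$. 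By~\eqref{eq:2.42}--\eqref{eq:2.44} it satisfies the one-dimensional $p$-eigen-equation $Q_r=-c\,k^p f^{p-1}$, with $c$ a power of $(p-1)$ fixed by the normalization of $\sin_p$. However, the term $aQ$ cannot be absorbed by $\Gamma f^{p-1}$ near $r_0$, because there $f\to0$ while $f_r\neq0$. So the drift must be handled genuinely.

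\emph{Step 3 (main obstacle: absorbing the drift).} I will pass to a Riccati formulation. Write $z=-f_r/f\ge0$ and $w=z^{p-1}=-Q/f^{p-1}$. Then condition i) is equivalent to
\begin{equation*}
w'\le\Gamma+(p-1)w^{p/(p-1)}-a\,w,\qquad w(\delta)=0,\qquad w\to\infty\ \text{as } r\to r_0^-,
\end{equation*}
since $f(r_0)=0$ exactly when $\int_\delta^{r_0}z=\infty$.

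The first thing to try is to estimate the drift by Young's inequality: $a z^{p-1}\le (p-1)\bigl[\varepsilon z^{p}\bigr]+\varepsilon^{1-p}(a/p)^p$-type splitting, with the exponents arranged so that the right-hand side dominates $\Gamma_0+(p-1)(1-\eta)w^{p/(p-1)}$. Here $\Gamma_0=\bigl((p-1)^{1/p}\pi_p/(2L)\bigr)^p$ is the pure $\sin_p$ eigenvalue on an interval of length $L$. I then take $w$ to be the solution of the comparison ODE $w'=\Gamma_0'+(p-1)(1-\eta)w^{p/(p-1)}$, where $\Gamma_0'$ denotes the resulting constant in place of $\Gamma_0$. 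This solution is explicit through $\tan_p$-type quotients of $\sin_p,\cos_p$, and it blows up exactly at time $L$ when the constants are matched. The optimal choice of $\eta,\varepsilon$ should reproduce the constant $\bigl(\Gamma_0^{1/p}+a/p\bigr)^p$, by the convexity identity $(A+B)^p=\min_{\theta\in(0,1)}\bigl(A^p\theta^{1-p}+B^p(1-\theta)^{1-p}\bigr)$. Verifying this constant matching is the delicate computation.

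If it does not close, the fallback is the substitution $f=e^{-(a/p)(r-\delta)}g$, which shifts $z$ by $a/p$. Expanding $(A+B)^p$ then bounds the cross terms, again using $f_r\le0$.

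Finally, $f=\exp\bigl(-\int_\delta^r z\bigr)$ is $\mathcal C^2$ on $(\delta,r_0)$ and continuous on $[0,r_0]$, so Lemma~\ref{lem:2.1} yields $\lambda_{1,p}(D)\le\Gamma$ for each $t$, and taking the infimum over $t\in(0,1)$ gives~\eqref{eq:2.46}.
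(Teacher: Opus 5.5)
Your primary route is correct and genuinely different from the paper's. The step you flagged as ``the delicate computation'' does close, provided you take $\eta=\varepsilon$.

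\emph{Why the constant matches.} Let $g=\sin_p(\theta_0(r_0-r))$ with $\theta_0=\pi_p/(2L)$, and let $w_0$ be its Riccati variable. Then $w_0'=\Gamma_0+(p-1)w_0^{p/(p-1)}$, $w_0(\delta)=0$, and $w_0$ blows up exactly at $r_0$. Put $\kappa=1/(1-\varepsilon)$. The function $\kappa^{p-1}w_0$ solves $w'=\Gamma_0(1-\varepsilon)^{1-p}+(p-1)(1-\varepsilon)w^{p/(p-1)}$ with the same blow-up point. So the matched constant is $\Gamma_0(1-\varepsilon)^{1-p}$, and your Young constant $(a/p)^p\varepsilon^{1-p}$ is sharp. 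With $A=\Gamma_0^{1/p}$, $B=a/p$ and $1-\varepsilon=A/(A+B)$, your convexity identity gives exactly $\Gamma=(A+B)^p$.

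\emph{A shortcut.} The resulting profile is explicit: $f=g^{\kappa}$ with $\kappa=(A+B)/A$, so you can drop the Riccati language. Substitute $f=g^\kappa$ into i), divide by $f^{p-1}$, and set $y=|g_r|/g$. Condition i) becomes $\Gamma\geq\kappa^{p-1}\bigl[\Gamma_0+\sup_{y\geq0}\bigl(ay^{p-1}-(\kappa-1)(p-1)y^p\bigr)\bigr]$. The right-hand side equals $\kappa^{p-1}\Gamma_0+\bigl(\tfrac{\kappa}{\kappa-1}\bigr)^{p-1}(a/p)^p=A(A+B)^{p-1}+B(A+B)^{p-1}$. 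Since $\kappa>1$, $f\in\mathcal{C}^1([\delta,r_0])$ with $f_r(\delta)=0$. State this, since Lemma~\ref{lem:2.1} asks for $\mathcal{C}^1$ up to $r_0$, not just continuity.

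\emph{Drop the fallback.} It is not needed, and as written it fails. For $f=e^{-(a/p)(r-\delta)}g$ one has $f_r(\delta)=-a/p\neq0$, which violates iii). The boundary term $-f(\delta)Q(\delta)J(x,\delta)>0$ in the proof of Lemma~\ref{lem:2.1} can then no longer be discarded.

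\emph{Small corrections.} In case (a) one simply has $J(x,0)=0$; no normalization is involved. In Step 2 the constant is exactly $c=p-1$.

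\emph{Comparison with the paper.} The paper keeps the variable drift $(n-1)\beta\coth(\beta r)$ in Lemma~\ref{lem:2.1}. It takes $f$ to be the first eigenfunction of the weighted mixed problem with weight $\sinh^{n-1}(\beta r)$, so i) holds with equality and $\Gamma=\Lambda_\delta$. Only afterwards does it bound $\Lambda_\delta$ through its Rayleigh quotient, using the test function $e^{(g(\delta)-g(r))/p}\sin_p(\theta(r))$, Minkowski's inequality, and $\coth(\beta r)\le\coth(\beta\delta)$. The exponential factor is harmless there because that Rayleigh quotient imposes no condition at $\delta$.

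You instead freeze the drift at the start and verify i) pointwise with an explicit supersolution. Young's inequality plus the convexity identity then play the role of Minkowski. Your argument is fully explicit and self-contained. It needs no existence, positivity, or interior regularity theory for the weighted one-dimensional $p$-Laplacian eigenproblem, which the paper takes for granted. The paper's route yields the intermediate, sharper bound $\lambda_{1,p}(D)\le\Lambda_\delta$ and a shorter final estimate. Both arguments lose the same amount when $\coth(\beta r)$ is replaced by $\coth(\beta\delta)$, so they reach the identical bound~\eqref{eq:2.46}.
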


\begin{proof}
Fix $\delta\in(0,r_0)$, and consider the weighted one-dimensional mixed eigenvalue problem
\begin{equation}\label{eq:2.47}
\begin{cases}
\displaystyle\left(\sinh^{n-1}(\beta r)|f_{r}|^{p-2}f_{r}\right)'+\Lambda_\delta\sinh^{n-1}(\beta r)f^{p-1}=0,
&\delta<r<r_0,\medskip\\
f_{r}(\delta)=0,
\qquad
f(r_0)=0.
\end{cases}
\end{equation}
Since
\begin{equation}\label{eq:2.48}
\sinh^{n-1}(\beta r)>0\quad\text{for}\quad r\in[\delta,r_0],
\end{equation}
this is a regular weighted one-dimensional $p$-Laplacian problem. Its first eigenvalue $\Lambda_\delta$ admits the variational characterization
\begin{equation}\label{eq:2.49}
\Lambda_\delta=\inf_{\substack{u\in W^{1,p}(\delta,r_0)\\u(r_0)=0,\;u\not\equiv0}}
\frac{\displaystyle\int_\delta^{r_0}\sinh^{n-1}(\beta r)|u'(r)|^p\,dr}{\displaystyle
\int_\delta^{r_0}\sinh^{n-1}(\beta r)|u(r)|^p\,dr}.
\end{equation}

Let $f$ be a first eigenfunction associated with $\Lambda_\delta$. We may choose $f$ so that
\begin{equation}\label{eq:2.50}
f(r)>0,\qquad\delta\leq r<r_0.
\end{equation}
From~\eqref{eq:2.47},
\begin{equation}\label{eq:2.51}
\left(\sinh^{n-1}(\beta r)|f_{r}|^{p-2}f_{r}\right)_{r}=-\Lambda_\delta\sinh^{n-1}(\beta r)f^{p-1}<0
\end{equation}
for $\delta<r<r_0$. Since $f_{r}(\delta)=0$, it follows that
\begin{equation}\label{eq:2.52}
\sinh^{n-1}(\beta r)|f_{r}(r)|^{p-2}f_{r}(r)<0
\end{equation}
for every $r\in(\delta,r_0)$. As the weight is strictly positive, we conclude that
\begin{equation}\label{eq:2.53}
f_{r}(r)<0,\qquad\delta<r<r_0.
\end{equation}
Expanding \eqref{eq:2.47} and dividing by $\sinh^{n-1}(\beta r)$, we obtain
\begin{equation}\label{eq:2.54}
\left(|f_{r}|^{p-2}f_{r}\right)_{r}+(n-1)\beta\coth(\beta r)|f_{r}|^{p-2}f_{r}+\Lambda_\delta f^{p-1}=0.
\end{equation}
We now extend $f$ to $[0,r_0]$ by setting
\begin{equation}\label{eq:2.55}
f(r)=f(\delta),\qquad0\leq r\le\delta.
\end{equation}
Therefore $f$ satisfies conditions {\rm i) and iv)} of Lemma~\ref{lem:2.1} with
\begin{equation}\label{eq:2.56}
\alpha(r)=(n-1)\beta\coth(\beta r)\quad\mbox{and}\quad\Gamma=\Lambda_\delta.
\end{equation}
It remains to verify condition {\rm v)} of Lemma~\ref{lem:2.1}. By~\eqref{eq:2.45},
we apply Lemma~\ref{lem:2.2} with
\begin{equation}\label{eq:2.58}
K(r)=-\beta^2\quad\mbox{and}\quad\phi(r)=\frac{1}{\beta}\sinh(\beta r).
\end{equation}
Then
\begin{equation}\label{eq:2.59}
\phi_{rr}(r)-\beta^2\phi(r)=0,\quad\phi(0)=0,\quad\mbox{and}\quad\phi_{r}(0)=1.
\end{equation}

We now verify the initial condition in Lemma~\ref{lem:2.1}. In case {\rm (a)}, since $N=S_q\Sigma$, we have $J(x,0)=0$, and hence
\begin{equation}\label{eq:2.60}
\varphi(0)J_r(x,0)-(n-1)\varphi_r(0)J(x,0)=0.
\end{equation}
In case {\rm (b)}, since $J(x,0)\geq 0$, we have
\begin{equation}\label{eq:2.60.1}
\varphi(0)J_r(x,0)-(n-1)\varphi_r(0)J(x,0)=-(n-1)J(x,0)\leq 0.
\end{equation}
Thus, the initial condition in Lemma~\ref{lem:2.1} is satisfied in both cases. Moreover,
\begin{equation}\label{eq:2.61}
\frac{\phi_{r}(r)}{\phi(r)}=\beta\coth(\beta r).
\end{equation}
Therefore Lemma~\ref{lem:2.2} yields
\begin{equation}\label{eq:2.62}
J_r(x,r)\leq(n-1)\beta\coth(\beta r)J(x,r)
\end{equation}
for every $x\in N$ and every $0<r<s(x)$. Integrating over $\widetilde N(r)$, we obtain
\begin{equation}\label{eq:2.63}
\int_{\widetilde N(r)}J_r(x,r)\,d\sigma\leq(n-1)\beta\coth(\beta r)\int_{\widetilde N(r)}J(x,r)\,d\sigma.
\end{equation}
Hence, the condition {\rm v)} of Lemma~\ref{lem:2.1} is satisfied. Consequently,
\begin{equation}\label{eq:2.64}
\lambda_{1,p}(D)\le\Lambda_\delta.
\end{equation}
We now estimate $\Lambda_\delta$ from above. Set
\begin{equation}\label{eq:2.65}
g(r)=(n-1)\log\sinh(\beta r).
\end{equation}
Then
\begin{equation}\label{eq:2.66}
e^{g(r)}=\sinh^{n-1}(\beta r)\quad\mbox{and}\quad g_{r}(r)=(n-1)\beta\coth(\beta r).
\end{equation}
Define
\begin{equation}\label{eq:2.67}
\theta(r)=\frac{\pi_p(r_0-r)}{2(r_0-\delta)}=\theta_0(r_0-r)\quad\mbox{and}\quad\theta_0
=\frac{\pi_p}{2(r_0-\delta)}.
\end{equation}
Notice that
\begin{equation}\label{eq:2.68}
0\leq\theta(r)\leq\frac{\pi_p}{2},\qquad r\in[\delta,r_0].
\end{equation}
Consider the test function
\begin{equation}\label{eq:2.69}
u(r)=\exp\left(\frac{g(\delta)-g(r)}{p}\right)\sin_p(\theta(r)).
\end{equation}
Since
\begin{equation}\label{eq:2.70}
\theta(r_0)=0\quad\mbox{and}\quad\sin_p(0)=0,
\end{equation}
we have $u(r_0)=0$, so $u$ is admissible in~\eqref{eq:2.49}.

Let
\begin{equation}\label{eq:2.71}
v(r)=\sin_p(\theta(r)).
\end{equation}
Then
\begin{equation}\label{eq:2.72}
u(r)=e^{(g(\delta)-g(r))/p}v(r)\quad\mbox{and}\quad e^{g(r)}|u(r)|^p=e^{g(\delta)}|v(r)|^p.
\end{equation}
Moreover,
\begin{equation}\label{eq:2.73}
u_{r}(r)=e^{(g(\delta)-g(r))/p}\left(v_{r}(r)-\frac{g_{r}(r)}{p}v(r)\right),
\end{equation}
and hence
\begin{equation}\label{eq:2.74}
e^{g(r)}|u_{r}(r)|^p=e^{g(\delta)}\left|v_{r}(r)-\frac{g_{r}(r)}{p}v(r)\right|^p.
\end{equation}
Substituting $u$ into~\eqref{eq:2.49}, we obtain
\begin{equation}\label{eq:2.75}
\Lambda_\delta^{1/p}\leq\frac{\left(\displaystyle\int_\delta^{r_0}\left|v_{r}(r)-\frac{g_{r}(r)}{p}v(r)\right|^pdr\right)^{1/p}}{\left(\displaystyle\int_\delta^{r_0}|v(r)|^pdr
\right)^{1/p}}.
\end{equation}
By Minkowski's inequality,
\begin{equation}\label{eq:2.76}
\begin{split}
\Lambda_\delta^{1/p}&\leq\frac{\left(\displaystyle\int_\delta^{r_0}|v_{r}(r)|^p\,dr
\right)^{1/p}}{\left(\displaystyle\int_\delta^{r_0}|v(r)|^p\,dr\right)^{1/p}}+\frac{\left(\displaystyle\int_\delta^{r_0}\left|\frac{g_{r}(r)}{p}v(r)\right|^pdr
\right)^{1/p}}{\left(\displaystyle\int_\delta^{r_0}|v(r)|^pdr\right)^{1/p}}.    
\end{split}
\end{equation}
We first estimate the first term in~\eqref{eq:2.76}. From~\eqref{eq:2.71}, we have
\begin{equation}\label{eq:2.77}
v_{r}(r)=-\theta_0\cos_p(\theta(r)).
\end{equation}
Thus
\begin{equation}\label{eq:2.78}
|v_{r}|^{p-2}v_{r}=-\theta_0^{p-1}\cos_p^{p-1}(\theta(r)),
\end{equation}
and from~\eqref{eq:2.44}, we obtain
\begin{equation}\label{eq:2.80}
\left(|v_{r}|^{p-2}v_{r}\right)_{r}=-(p-1)\theta_0^p\sin_p^{p-1}(\theta(r)).
\end{equation}
From~\eqref{eq:2.71}, it follows that
\begin{equation}\label{eq:2.81}
\left(|v_{r}|^{p-2}v_{r}\right)_{r}+(p-1)\theta_0^p v^{p-1}=0.
\end{equation}
Furthermore,
\begin{equation}\label{eq:2.82}
\theta(\delta)=\frac{\pi_p}{2}\quad\mbox{and}\quad\theta(r_0)=0,
\end{equation}
so that
\begin{equation}\label{eq:2.83}
v_{r}(\delta)=-\theta_0\cos_p\left(\frac{\pi_p}{2}\right)=0\quad\mbox{and}\quad v(r_0)=0.
\end{equation}
Also,
\begin{equation}\label{eq:2.84}
v(r)>0,\qquad\delta\leq r<r_0.
\end{equation}
Since $v>0$ on $(\delta,r_0)$, the standard nodal characterization of the one-dimensional $p$-Laplacian implies that $v$ corresponds to the first eigenvalue of this mixed problem
\begin{equation}\label{eq:2.85}
\begin{cases}
\displaystyle
\left(|v_{r}|^{p-2}v_{r}\right)_{r}+\lambda v^{p-1}=0,&\delta<r<r_0,\medskip\\
v_{r}(\delta)=0,\qquad v(r_0)=0,
\end{cases}
\end{equation}
corresponding to
\begin{equation}\label{eq:2.86}
\lambda=(p-1)\theta_0^p=(p-1)\left(\frac{\pi_p}{2(r_0-\delta)}\right)^p.
\end{equation}
Multiplying the equation by $v$ and integrating over $[\delta,r_0]$, we obtain
\begin{equation}\label{eq:2.87}
\int_\delta^{r_0}|v_{r}|^p\,dr=(p-1)\theta_0^p\int_\delta^{r_0}v^p\,dr,
\end{equation}
where the boundary term vanishes since $v_{r}(\delta)=0$ and $v(r_0)=0$. Therefore, 
\begin{equation}\label{eq:2.88}
\frac{\displaystyle\int_\delta^{r_0}|v_{r}(r)|^p\,dr}{\displaystyle\int_\delta^{r_0}|v(r)|^p\,dr}=(p-1)\left(\frac{\pi_p}{2(r_0-\delta)}
\right)^p,
\end{equation}
and consequently
\begin{equation}\label{eq:2.89}
\frac{\left(\displaystyle\int_\delta^{r_0}|v_{r}(r)|^p\,dr\right)^{1/p}}{\left(
\displaystyle\int_\delta^{r_0}|v(r)|^p\,dr\right)^{1/p}}=(p-1)^{1/p}\frac{\pi_p}{2(r_0-\delta)}.
\end{equation}
We next estimate the second term in~\eqref{eq:2.76}. Since
\begin{equation}\label{eq:2.90}
g_{r}(r)=(n-1)\beta\coth(\beta r)
\end{equation}
and the function $r\to\coth(\beta r)$ is decreasing on $(0,\infty)$, we have
\begin{equation}\label{eq:2.91}
g_{r}(r)\leq(n-1)\beta\coth(\beta\delta)
\end{equation}
for every $r\in[\delta,r_0]$. Therefore
\begin{equation}\label{eq:2.92}
\frac{\left(\displaystyle\int_\delta^{r_0}\left|\frac{g_{r}(r)}{p}v(r)\right|^pdr
\right)^{1/p}}{\left(\displaystyle\int_\delta^{r_0}|v(r)|^pdr\right)^{1/p}}
\leq\frac{(n-1)\beta}{p}\coth(\beta\delta).
\end{equation}
Combining \eqref{eq:2.76}, \eqref{eq:2.89}, and \eqref{eq:2.92}, we obtain
\begin{equation}\label{eq:2.95}
\Lambda_\delta\leq\left[(p-1)^{1/p}\frac{\pi_p}{2(r_0-\delta)}+\frac{(n-1)\beta}{p}
\coth(\beta\delta)\right]^p.
\end{equation}
Together with \eqref{eq:2.64}, this gives
\begin{equation}\label{eq:2.96}
\lambda_{1,p}(D)\leq\left[(p-1)^{1/p}\frac{\pi_p}{2(r_0-\delta)}+\frac{(n-1)\beta}{p}
\coth(\beta\delta)\right]^p.
\end{equation}
Finally, by taking $\delta=tr_0$, for $0<t<1$ we have $r_0-\delta=r_0(1-t)$, and therefore
\begin{equation}\label{eq:2.98}
\lambda_{1,p}(D)\leq\left[\frac{(p-1)^{1/p}\pi_p}{2r_0(1-t)}+\frac{(n-1)\beta}{p}
\coth(\beta tr_0)\right]^p.
\end{equation}
Since this inequality holds for every $t\in(0,1)$, we conclude that
\begin{equation}\label{eq:2.99}
\lambda_{1,p}(D)\leq\inf_{0<t<1}\left[\frac{(p-1)^{1/p}\pi_p}{2r_0(1-t)}+
\frac{(n-1)\beta}{p}\coth(\beta tr_0)\right]^p.
\end{equation}
\end{proof}

As an immediate consequence of Theorem~\ref{teo:2.1}, we obtain the following estimate for the $p$-fundamental tone.
\begin{corollary}\label{cor:2.2}
Let $p\geq2$, and let $\Sigma^n$ be a complete, connected, noncompact Riemannian manifold. Let $C\subset\Sigma^{n}$ be a compact domain and assume that
\begin{equation}\label{eq:2.100}
\operatorname{Ric}\geq-(n-1)\beta^2\quad\text{on}\quad\Sigma^{n}\setminus C
\end{equation}
for some $\beta>0$. Then
\begin{equation}\label{eq:2.101}
\lambda_{1,p}(\Sigma)\leq\left(\frac{(n-1)\beta}{p}\right)^p.
\end{equation}
\end{corollary}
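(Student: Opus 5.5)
We apply Theorem~\ref{teo:2.1} in case (b), taking tubular neighborhoods $D_{r_0}=B(C',r_0)$ of a suitable compact body and letting $r_0\to\infty$.

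First we adjust $C$ so that it is an admissible body for case (b). Since $C$ is compact, we pick a compact $n$-dimensional body $C'\supset C$ with smooth boundary; for instance, a smoothed sublevel set of a distance function to $C$, or a large regular sublevel set of a smooth exhaustion function. Then $\operatorname{Ric}\geq-(n-1)\beta^2$ on $\Sigma\setminus C'\subset\Sigma\setminus C$.

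Next we look at how Theorem~\ref{teo:2.1} actually uses the Ricci hypothesis. It enters only through Lemma~\ref{lem:2.2}, applied along the minimizing normal geodesics $r\mapsto F(x,r)$ for $0<r<s(x)$. These geodesics leave $\partial C'$ orthogonally and stay at distance $r>0$ from $C'$, so they lie in $\Sigma\setminus C'$, where the bound holds. Hence the argument of Theorem~\ref{teo:2.1} goes through verbatim for $D_{r_0}=B(C',r_0)$ with only this exterior hypothesis. The interior region $C'$ only adds a nonnegative contribution to the denominator of the Rayleigh quotient, as already noted in the proof of Lemma~\ref{lem:2.1}. For every $r_0>0$ and $t\in(0,1)$ this gives
\[
\lambda_{1,p}(D_{r_0})\leq\left[\frac{(p-1)^{1/p}\pi_p}{2r_0(1-t)}+\frac{(n-1)\beta}{p}\coth(\beta t r_0)\right]^p.
\]

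Now we pass to the limit. Since $\Sigma$ is complete and noncompact, the closed sets $\overline{D_{r_0}}$ form an exhaustion of $\Sigma$ by compact domains as $r_0\to\infty$; they are compact by Hopf--Rinow. By \eqref{eq_2.5} and domain monotonicity, $\lambda_{1,p}(\Sigma)\leq\lambda_{1,p}(D_{r_0})$ for every $r_0$. Fix $t=1/2$ and let $r_0\to\infty$. The first term tends to $0$, and $\coth(\beta r_0/2)\to1$, so the bracket tends to $(n-1)\beta/p$. This yields \eqref{eq:2.101}.

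The main obstacle is the localization step. Theorem~\ref{teo:2.1} is stated with the Ricci bound on all of $D$, so we must check that the proof needs it only along the radial geodesics outside $C'$. We also need the choice of $C'$ with smooth boundary. If the interior-bound issue seems delicate, a fallback is to note that $\lambda_{1,p}(\Sigma)\leq\lambda_{1,p}(U)$ for any open $U\subset\Sigma\setminus C$. Then we would apply case (b) to tubes around a body far out in $\Sigma\setminus C$. Since such a body cannot contain $C$, this route has to use annular regions, which is why the $C'\supset C$ approach is preferable.
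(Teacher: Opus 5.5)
Your proof is correct, but it takes a different route from the paper.

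The paper never uses case (b) here. It picks points $q_j$ with $d(q_j,q_0)\to\infty$ (Hopf--Rinow) and sets $R_j=\tfrac12\operatorname{dist}(q_j,C)$. By the triangle inequality, $B(q_j,R_j)\subset\Sigma\setminus C$ and $R_j\to\infty$. It then applies Theorem~\ref{teo:2.1} in case (a), verbatim, to these balls, on which the Ricci bound holds everywhere, and concludes by domain monotonicity.

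This is exactly your ``fallback'', and your objection to it is unfounded. Case (a) needs only a center point, not a body containing $C$, so no annular regions are needed.

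Your route instead enlarges $C$ to a compact body $C'$ with smooth boundary, for instance a regular sublevel set of a smooth proper function. You then localize the curvature hypothesis, and this localization is valid:
\begin{itemize}
\item Lemma~\ref{lem:2.2} uses the bound only along $r\mapsto F(x,r)$ for $0<r<s(x)$.
\item Such a minimizing normal geodesic cannot recross $\partial C'$, so it lies in $\Sigma\setminus C'$.
\item The initial condition of Lemma~\ref{lem:2.2} holds regardless of curvature, since $\varphi(0)=0$ and $J(x,0)\geq 0$.
\end{itemize}

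Your approach yields a slightly stronger version of Theorem~\ref{teo:2.1}(b), with the Ricci bound needed only on $D\setminus C'$, and the tubes $B(C',r_0)$ genuinely exhaust $\Sigma$. The paper's approach uses Theorem~\ref{teo:2.1} exactly as stated, needing neither a smoothing of $C$ nor a re-inspection of the proof.
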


\begin{proof}
Fix $q_0\in\Sigma^{n}$, and as $C$ is compact, there exists $A>0$ such that $C\subset B(q_0,A)$. Since $\Sigma^{n}$ is complete and noncompact, the Hopf-Rinow theorem
implies that $\Sigma^{n}$ is unbounded. Hence there exists a sequence $\{q_j\}_{j\in\mathbb{N}}\subset\Sigma^{n}$ such that $d(q_j,q_0)\to+\infty$. 

Set $d_j=\operatorname{dist}(q_j,C)$, and for every $x\in C$,
\begin{equation}\label{eq:2.102}
d(q_j,x)\geq d(q_j,q_0)-d(q_0,x)\geq d(q_j,q_0)-A.
\end{equation}
Therefore
\begin{equation}\label{eq:2.103}
d_j\geq d(q_j,q_0)-A.
\end{equation}
and so $d_j\to\infty$. Let $R_j=\frac{d_j}{2}$. If $x\in B(q_j,R_j)$, then
\begin{equation}\label{eq:2.104}
\operatorname{dist}(x,C)\geq\operatorname{dist}(q_j,C)-d(x,q_j)>d_j-R_j=\frac{d_j}{2}>0.
\end{equation}
Hence $B(q_j,R_j)\subset\Sigma^{n}\setminus C$. In particular,
\begin{equation}\label{eq:2.105}
\operatorname{Ric}\geq-(n-1)\beta^2\quad\mbox{on}\quad B(q_j,R_j).
\end{equation}
By the variational characterization of the $p$-fundamental tone, every function in $W^{1,p}_0(B(q_j,R_j))$, extended by zero to $\Sigma^{n}$, is an admissible test function on $\Sigma^{n}$. Consequently,
\begin{equation}\label{eq:2.106}
\lambda_{1,p}(\Sigma)\leq\lambda_{1,p}(B(q_j,R_j)).
\end{equation}
Fix $t\in(0,1)$. by applying Theorem~\ref{teo:2.1} to $B(q_j,R_j)$, we obtain
\begin{equation}\label{eq:2.107}
\lambda_{1,p}(\Sigma)\leq\left[\frac{(p-1)^{1/p}\pi_p}{2R_j(1-t)}+\frac{(n-1)\beta}{p}
\coth(\beta tR_j)\right]^p.
\end{equation}
Since $R_j\to\infty$, for every fixed $t\in(0,1)$,
\begin{equation}\label{eq:2.108}
\frac{(p-1)^{1/p}\pi_p}{2R_j(1-t)}\to0\quad\mbox{and}\quad\coth(\beta tR_j)\longrightarrow1.
\end{equation}
Passing to the limit as $j\to\infty$ gives
\begin{equation}\label{eq:2.109}
\lambda_{1,p}(\Sigma)\leq\left(\frac{(n-1)\beta}{p}\right)^p.
\end{equation}
\end{proof}

\section{Applications to submanifolds}\label{sec:3}
			
Let $\overline{M}^m$ be an arbitrary  Riemannian manifold and let $\Sigma^{n}$ be a submanifold immersed isometrically in $\overline{M}^m$. Fix a point $q\in \Sigma^{n}$ and take a local orthonormal frame $\{e_{1},\ldots,e_{m}\}$ of $\overline{M}^m$ around $q$ such that $\{e_{1},\ldots,e_{n}\}$ are tangent fields and $\{e_{n+1},\ldots,e_{m}\}$ are normal fields on $\Sigma^{n}$. For each $\alpha$, $n+1\leq\alpha\leq m$, we define a linear map $A_{\alpha}:T_{q}\Sigma\to T_{q}\Sigma$ by
\begin{equation}\label{eq_4.1}
\langle A_{\alpha}(X),Y\rangle=\langle\overline{\nabla}_{X}Y,e_{\alpha}\rangle,
\end{equation}
where $X,Y$ are tangent fields on $\Sigma^{n}$ and $\overline{\nabla}$ is the Riemannian connection of $\overline{M}^{m}$. The square $S$ of the norm of the second fundamental form $A$, the mean curvature vector $h$ and the mean curvature function $H$ are defined by:
\begin{equation}\label{eq_4.2}
S=\sum_{\alpha}{\rm tr}(A_{\alpha}^{2}),\quad h=\dfrac{1}{n}{\rm tr}(A)\quad\mbox{and}\quad H=|h|.
\end{equation}

It is well known that the curvature tensor of $\Sigma^{n}$ can be described in terms of its second fundamental form $A$ and of the curvature tensor of $\overline{M}^m$. In particular, the Ricci curvature of $\Sigma^{n}$ is given by
\begin{equation}\label{eq_4.3}
{\rm Ric}(X,X)=\sum_{i}\langle\overline{R}(X,e_{i})X,e_{i}\rangle+n\langle A_{h}(X),X\rangle-\sum_{\alpha}|A_{\alpha}(X)|^{2},
\end{equation}
for every tangent vector field $X$ and $\{e_{1},\ldots,e_{n}\}$ denotes a local orthonormal frame field on $\Sigma^{n}$. By contracting~\eqref{eq_4.3}, we have the scalar curvature $R$ of $\Sigma^{n}$
\begin{equation}\label{eq_4.4}
R=\sum_{i,j}\langle\overline{R}(e_{j},e_{i})e_{j},e_{i}\rangle+n^{2}H^{2}-S.
\end{equation}

To prove our main results, we need the following Leung-type lower estimate of the Ricci curvature tensor (cf.~\cite[Main Theorem]{Leung:92}).
\begin{lemma}\label{lemma_4.1}
Let $\Sigma^n$ be a submanifold of the Riemannian manifold $\overline{M}^m$. Then
\begin{equation}\label{eq_4.5}
\begin{split}
{\rm Ric}(X,X)&\geq-\dfrac{n-1}{n}\left(S+\frac{n(n-2)}{\sqrt{n(n-1)}}H\sqrt{S-nH^{2}}-2nH^2\right)|X|^{2}\\
&\quad+\sum_{i}\langle\overline{R}(X,e_{i})X,e_{i}\rangle,
\end{split}
\end{equation}
for all $X\in\mathfrak{X}(\Sigma)$.
\end{lemma}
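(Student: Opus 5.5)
The inequality is pointwise and homogeneous of degree two in $X$, so I will fix $q\in\Sigma^n$, assume $|X|=1$, and choose the orthonormal tangent frame with $e_1=X$. Starting from the Gauss formula \eqref{eq_4.3}, the ambient term $\sum_i\langle\overline R(X,e_i)X,e_i\rangle$ appears on both sides of \eqref{eq_4.5}. It therefore suffices to show
\begin{equation*}
n\langle A_h(e_1),e_1\rangle-\sum_\alpha|A_\alpha(e_1)|^2\geq-\frac{n-1}{n}\left(S+\frac{n(n-2)}{\sqrt{n(n-1)}}H\sqrt{S-nH^2}-2nH^2\right).
\end{equation*}

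If $H(q)\neq0$, I take $e_{n+1}=h/H$; if $H(q)=0$ the choice of $e_{n+1}$ is irrelevant. With this frame, $\operatorname{tr}A_{n+1}=nH$, $\operatorname{tr}A_\alpha=0$ for $\alpha>n+1$, and $A_h=HA_{n+1}$. Next I pass to the traceless second fundamental form $\Phi_\alpha=A_\alpha-\langle h,e_\alpha\rangle I$, writing $\phi^\alpha_{ij}$ for its entries. These satisfy $|\Phi|^2=\sum_\alpha\operatorname{tr}\Phi_\alpha^2=S-nH^2$, and $a^{n+1}_{11}=H+\phi^{n+1}_{11}$. Expanding $nHa^{n+1}_{11}-(a^{n+1}_{11})^2$ gives
\begin{equation*}
n\langle A_h(e_1),e_1\rangle-\sum_\alpha|A_\alpha(e_1)|^2=(n-1)H^2+(n-2)H\phi^{n+1}_{11}-\sum_\alpha\sum_j(\phi^\alpha_{1j})^2 .
\end{equation*}

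The key algebraic step, and the only point requiring care, is a sharp bound on the first row of a traceless symmetric matrix. For each $\alpha$, tracelessness and Cauchy--Schwarz give $\sum_{j\geq2}(\phi^\alpha_{jj})^2\geq(\phi^\alpha_{11})^2/(n-1)$. Hence
\begin{equation*}
\operatorname{tr}\Phi_\alpha^2\geq\frac{n}{n-1}(\phi^\alpha_{11})^2+2\sum_{j\geq2}(\phi^\alpha_{1j})^2 .
\end{equation*}
This yields two consequences. First, $\sum_j(\phi^\alpha_{1j})^2\leq\frac{n-1}{n}\operatorname{tr}\Phi_\alpha^2$, since the quantity $\phi_{11}^2+s$ under the constraint $\frac{n}{n-1}\phi_{11}^2+2s\leq\operatorname{tr}\Phi_\alpha^2$ is maximized at $s=0$; summing over $\alpha$ gives $\sum_\alpha\sum_j(\phi^\alpha_{1j})^2\leq\frac{n-1}{n}|\Phi|^2$. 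Second, $|\phi^{n+1}_{11}|\leq\sqrt{\tfrac{n-1}{n}}\,|\Phi|$.

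Inserting both bounds (using $H\geq0$ and $n\geq2$) gives
\begin{equation*}
(n-1)H^2-(n-2)\sqrt{\tfrac{n-1}{n}}\,H|\Phi|-\tfrac{n-1}{n}|\Phi|^2 .
\end{equation*}
Substituting $|\Phi|^2=S-nH^2$ and using $\frac{n-1}{n}\cdot\frac{n(n-2)}{\sqrt{n(n-1)}}=(n-2)\sqrt{\frac{n-1}{n}}$, this expression equals exactly the right-hand side of \eqref{eq_4.5}, which completes the argument.

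For sharpness in the totally umbilical case, note that there $\Phi=0$, so every inequality above is an equality.
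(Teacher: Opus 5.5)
Your proof is correct and follows essentially the same route as the paper. Both pass to the traceless part $\Phi_\alpha=A_\alpha-\langle h,e_\alpha\rangle I$, rewrite the Gauss formula as $(n-1)H^2|X|^2+(n-2)\langle\Phi_h(X),X\rangle-\sum_\alpha|\Phi_\alpha(X)|^2$ plus the ambient term, and then apply the standard traceless bounds $|\langle\Phi_h X,X\rangle|\le\sqrt{(n-1)/n}\,|\Phi_h||X|^2$, $\sum_\alpha|\Phi_\alpha X|^2\le\frac{n-1}{n}|\Phi|^2|X|^2$ and $|\Phi_h|\le H|\Phi|$; the differences are cosmetic: your adapted frame $e_1=X$, $e_{n+1}=h/H$ makes $\Phi_h=H\Phi_{n+1}$ so the last bound is immediate, and you prove the two traceless-matrix inequalities that the paper only asserts.
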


\begin{proof}
For each $\alpha$, define the linear map $\phi_{\alpha}:T_{q}\Sigma\to T_{q}\Sigma$ by
\begin{equation}\label{eq_4.6}
\langle\phi_{\alpha}(X),Y\rangle=\langle A_{\alpha}(X),Y\rangle-\langle h,e_{\alpha}\rangle\langle X,Y\rangle,
\end{equation}
and a bilinear map $\phi:T_{q}\Sigma\times T_{q}\Sigma\to(T_{q}\Sigma)^{\perp}$ by
\begin{equation}\label{eq_4.7}
\phi(X,Y)=\sum_{\alpha}\langle\phi_{\alpha}(X),Y\rangle e_{\alpha}
\end{equation}
It is easy to check that each map $\phi_{\alpha}$ is traceless and that
\begin{equation}\label{eq_4.8}
|\phi|^{2}=S-nH^{2}.
\end{equation}
Observe that $|\phi|^{2}=0$ if and only if $\Sigma^{n}$ is a totally umbilic submanifold of $\overline{M}^m$.

In terms of~\eqref{eq_4.7}, \eqref{eq_4.1} can be rewritten as
\begin{equation}\label{eq_4.9}
\begin{split}
{\rm Ric}(X,X)&=\sum_{i}\langle\overline{R}(X,e_{i})X,e_{i}\rangle+(n-2)\langle\phi_{h}(X),X\rangle\\
&\quad+(n-1)H^{2}|X|^{2}-\sum_{\alpha}|\phi_{\alpha}(X)|^{2}.
\end{split}
\end{equation}
Since $\phi$ is traceless, from the Cauchy-Schwarz inequality, we have
\begin{equation}\label{eq_4.10}
\langle\phi_{h}(X),X\rangle\geq-|\langle\phi_{h}(X),X\rangle|\geq-\sqrt{\dfrac{n-1}{n}}|\phi_{h}||X|^{2}
\end{equation}
and
\begin{equation}\label{eq_4.11}
\sum_{\alpha}\langle\phi^{2}_{\alpha}(X),X\rangle\leq\dfrac{n-1}{n}\sum_{\alpha}|\phi_{\alpha}|^{2}|X|^{2}=\dfrac{n-1}{n}|\phi|^{2}|X|^{2}.
\end{equation}
Besides this, we observe that
\begin{equation}\label{eq_4.12}
\phi_{h}=\sum_{\alpha}\langle h,e_{\alpha}\rangle\phi_{\alpha}.
\end{equation}
From Cauchy-Schwarz's inequality and Hilbert-Schmidt's norm definition, we have
\begin{equation}\label{eq_4.13}
|\phi_{h}|^{2}=\bigg|\sum_{\alpha,i}\langle h,e_{\alpha}\rangle\phi_{\alpha}\bigg|^{2}\leq\left(\sum_{\alpha}\langle h,e_{\alpha}\rangle^{2}\right)\left(\sum_{\alpha}|\phi_{\alpha}|^{2}\right)\leq H^{2}|\phi|^{2}.
\end{equation}
Hence
\begin{equation}\label{eq_4.14}
\langle\phi_{h}(X),X\rangle\geq-\sqrt{\dfrac{n-1}{n}}H|\phi||X|^{2},
\end{equation}
and consequently,
\begin{equation}\label{eq_4.15}
\begin{split}
{\rm Ric}(X,X)&\geq\sum_{i}\langle\overline{R}(X,e_{i})X,e_{i}\rangle-\dfrac{(n-1)(n-2)}{\sqrt{n(n-1)}}H|\phi||X|^{2}\\
&\quad+(n-1)H^{2}|X|^{2}-\dfrac{n-1}{n}|\phi|^{2}|X|^{2},
\end{split}
\end{equation}
for every tangent vector field $X$. By replacing~\eqref{eq_4.8} in~\eqref{eq_4.15} we finish.
\end{proof}

For the first application of Corollary~\ref{cor:2.2}, we will consider the case where the ambient space is a Riemannian manifold with constant sectional curvature, that is $\overline{M}^m=\mathbb{Q}^{m}(c)$, with $c\in \{0,-1\}$. Combining this identity with Lemma~\ref{lemma_4.1}, we obtain the following result.
\begin{theorem}\label{teo_4.1}
Let $\Sigma^n$ be a complete noncompact immersed submanifold of the Riemannian space form $\mathbb{Q}^{m}(c)$ $(c\in\{0,-1\}\,\,\mbox{and}\,\,n\geq 2)$. If the mean curvature of $\Sigma^{n}$ satisfies $H\leq b$ and the scalar curvature of $\Sigma^{n}$ satisfies $R \geq-n(n-1)a^2$ for real numbers $a\geq0$ and $b^{2}+c\geq0$. Then
\begin{equation}\label{eq_4.16}
\lambda_{1,p}(\Sigma)\leq\dfrac{(n-1)^p}{p^p}\left((n-2)(b^2+c)+(n-2)b\sqrt{a^2+b^{2}+c}+(n-1)a^2\right)^\frac{p}{2}.
\end{equation}
\end{theorem}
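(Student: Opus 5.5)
The plan is to reduce Theorem~\ref{teo_4.1} to Corollary~\ref{cor:2.2}. I will obtain a uniform lower bound $\operatorname{Ric}\geq-(n-1)\beta^2$ on all of $\Sigma^n$, with
\begin{equation*}
\beta^2=(n-2)(b^2+c)+(n-2)b\sqrt{a^2+b^2+c}+(n-1)a^2 .
\end{equation*}
Then Corollary~\ref{cor:2.2}, applied with any compact domain $C$, gives $\lambda_{1,p}(\Sigma)\leq((n-1)\beta/p)^p$, which is exactly \eqref{eq_4.16}. The Ricci bound will come from Lemma~\ref{lemma_4.1}, once $S$ is controlled through the scalar curvature hypothesis.

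\emph{Step 1 (ambient curvature).} Since $\overline{M}^m=\mathbb{Q}^m(c)$, with the sign convention implicit in \eqref{eq_4.3}, we have
\begin{equation*}
\sum_i\langle\overline{R}(X,e_i)X,e_i\rangle=(n-1)c|X|^2 ,\qquad \sum_{i,j}\langle\overline{R}(e_j,e_i)e_j,e_i\rangle=n(n-1)c .
\end{equation*}

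\emph{Step 2 (control of $|\phi|$).} Put these into \eqref{eq_4.4}: $R=n(n-1)c+n^2H^2-S$. The hypothesis $R\geq-n(n-1)a^2$ then gives $S\leq n(n-1)(a^2+c)+n^2H^2$. Hence
\begin{equation*}
|\phi|^2=S-nH^2\leq n(n-1)(a^2+c+H^2).
\end{equation*}
In particular $a^2+c+H^2\geq0$, so the square root below makes sense.

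\emph{Step 3 (Ricci bound).} Rewrite \eqref{eq_4.15} with Step 1 as
\begin{equation*}
\operatorname{Ric}(X,X)\geq-(n-1)\Big[\tfrac{n-2}{\sqrt{n(n-1)}}H|\phi|+\tfrac1n|\phi|^2-H^2-c\Big]|X|^2 .
\end{equation*}
The bracket is increasing in $|\phi|$, so I can insert the bound from Step 2. This gives
\begin{equation*}
\operatorname{Ric}(X,X)\geq-(n-1)\Big[(n-2)H\sqrt{a^2+c+H^2}+(n-1)a^2+(n-2)(H^2+c)\Big]|X|^2 .
\end{equation*}
The right-hand bracket is nondecreasing in $H\geq0$ because $n\geq2$. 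Since $H\leq b$ (and $b\geq H\geq0$), replacing $H$ by $b$ gives $\operatorname{Ric}\geq-(n-1)\beta^2$ with $\beta^2$ as above. Note that $\beta^2\geq0$ because $b^2+c\geq0$.

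\emph{Step 4 (conclusion and main obstacle).} If $\beta>0$, Corollary~\ref{cor:2.2} applies directly. If $\beta=0$, the hypothesis still holds with any $\varepsilon>0$ in place of $\beta$; I apply Corollary~\ref{cor:2.2} with $\varepsilon$ and let $\varepsilon\to0$, getting $\lambda_{1,p}(\Sigma)=0$, which agrees with the formula.

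The main obstacle is bookkeeping rather than anything conceptual. I must check that the curvature sign convention in \eqref{eq_4.3}--\eqref{eq_4.4} really gives $+(n-1)c$. I must also justify both monotonicity claims, in $|\phi|$ and in $H$, so that the worst case is $H=b$ with $S$ maximal.
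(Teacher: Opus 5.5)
Your proposal is correct and follows essentially the same route as the paper: specialize Lemma~\ref{lemma_4.1} to $\mathbb{Q}^m(c)$, use the Gauss equation $R=n(n-1)c+n^2H^2-S$ together with $H\leq b$ and $R\geq -n(n-1)a^2$ to get $\operatorname{Ric}\geq-(n-1)A$, then apply Corollary~\ref{cor:2.2}, with the $\varepsilon\to0$ argument when $A=0$. The only cosmetic difference is that you first bound $|\phi|^2=S-nH^2\leq n(n-1)(a^2+c+H^2)$ and then use monotonicity in $|\phi|$ and in $H$, whereas the paper substitutes $S=n(n-1)c+n^2H^2-R$ directly into the bracket of \eqref{eq_4.17} and bounds it term by term, which tacitly uses the same monotonicity.
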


\begin{proof}
Since $\mathbb{Q}^{m}(c)$ has constant sectional curvature $c$, Lemma~\ref{lemma_4.1} becomes
\begin{equation}\label{eq_4.17}
{\rm Ric}(X,X)\geq-\dfrac{n-1}{n}\left(S+\frac{n(n-2)}{\sqrt{n(n-1)}}H\sqrt{S-nH^{2}}-nc-2nH^2\right)|X|^{2},
\end{equation}
for all $X\in\mathfrak{X}(\Sigma)$. Also, the scalar curvature of $\Sigma^{n}$ satisfies
\begin{equation}\label{eq_4.18}
R=n(n-1)c+n^{2}H^{2}-S.
\end{equation}
Thus,
\begin{equation}\label{eq_4.19}
\begin{split}
S+\frac{n(n-2)}{\sqrt{n(n-1)}}&H\sqrt{S-nH^2}-cn-2nH^2\\
&=\frac{n(n-2)}{\sqrt{n(n-1)}}H\sqrt{n(n-1)c+n(n-1)H^{2}-R}\\
&\quad+n(n-2)(H^{2}+c)-R.
\end{split}
\end{equation}
Now, by the assumptions on $H$ and $R$,
\begin{equation}\label{eq_4.20}
\begin{split}
\frac{n(n-2)}{\sqrt{n(n-1)}}H&\sqrt{n(n-1)c+n(n-1)H^{2}-R}+n(n-2)(H^{2}+c)-R\\
&\leq n(n-1)a^2+\frac{n(n-2)}{\sqrt{n(n-1)}}b\sqrt{n(n-1)(a^2+b^{2}+c)}+n(n-2)(b^2+c)\\
&=n(n-1)a^2+n(n-2)b\sqrt{a^2+b^{2}+c}+n(n-2)(b^2+c).
\end{split}
\end{equation}
Hence, \eqref{eq_4.5} yields
\begin{equation}
\operatorname{Ric}(X,X)\geq-(n-1)A|X|^2,
\end{equation}
where
\begin{equation}
A:=(n-1)a^2+(n-2)b\sqrt{a^2+b^2+c}+(n-2)(b^2+c).
\end{equation}
So, if $A>0$, by taking $\beta=\sqrt{A}$ and applying Corollary~\ref{cor:2.2}, we get
\begin{equation}
\lambda_{1,p}(\Sigma)\leq\left(\frac{(n-1)\beta}{p}\right)^p=\frac{(n-1)^p}{p^p}A^{p/2}.
\end{equation}
On the other hand, if $A=0$, then $\operatorname{Ric}\geq0$. Hence, for every $\varepsilon>0$,
\begin{equation}
\operatorname{Ric}\geq-(n-1)\varepsilon^2.
\end{equation}
Hence, by applying Corollary~\ref{cor:2.2} with $\beta=\varepsilon$ and letting
$\varepsilon\to0$, we obtain
\begin{equation}
\lambda_{1,p}(\Sigma)=0.
\end{equation}
Therefore, in both cases, the asserted estimate follows.
\end{proof}

The next two results concern submanifolds immersed in Euclidean space.
\begin{corollary}\label{cor_4.1}
If $\Sigma^n$ is a complete noncompact minimal submanifold in $\mathbb{R}^{m}$ such that the scalar curvature satisfies $R\geq-n(n-1)a^2$ for some real number $a\geq0$, then
\begin{equation}\label{eq_4.22}
\lambda_{1,p}(\Sigma) \leq \frac{(n-1)^{\frac{3p}{2}} a^p}{p^p} .
\end{equation}
\end{corollary}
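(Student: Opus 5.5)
The plan is to read this off directly from Theorem~\ref{teo_4.1} by specializing its parameters, with $p\geq 2$ understood as in Corollary~\ref{cor:2.2}. Euclidean space is the space form $\mathbb{Q}^m(0)$, so I take $c=0$. Minimality means the mean curvature vector $h$ vanishes identically, hence $H\equiv 0$, and the hypothesis $H\leq b$ holds with $b=0$. The compatibility condition $b^2+c\geq 0$ of Theorem~\ref{teo_4.1} then reads $0\geq 0$, which is satisfied. The scalar curvature hypothesis $R\geq -n(n-1)a^2$ with $a\geq 0$ is exactly the one assumed there.

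For $n\geq 2$, I substitute $b=0$ and $c=0$ into \eqref{eq_4.16}. The terms $(n-2)(b^2+c)$ and $(n-2)b\sqrt{a^2+b^2+c}$ both vanish, leaving
\begin{equation*}
\lambda_{1,p}(\Sigma)\leq \frac{(n-1)^p}{p^p}\bigl((n-1)a^2\bigr)^{p/2}=\frac{(n-1)^p(n-1)^{p/2}a^p}{p^p}=\frac{(n-1)^{\frac{3p}{2}}a^p}{p^p},
\end{equation*}
which is \eqref{eq_4.22}.

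The degenerate case $a=0$ gives the bound $0$, and it is already covered inside the proof of Theorem~\ref{teo_4.1}: there the constant $A$ equals $0$, so $\operatorname{Ric}\geq 0$, and letting $\varepsilon\to 0$ in Corollary~\ref{cor:2.2} gives $\lambda_{1,p}(\Sigma)=0$.

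If one also wants to include $n=1$, which Theorem~\ref{teo_4.1} excludes, the right-hand side is $0$. A complete noncompact curve is isometric to $\mathbb{R}$, whose $p$-fundamental tone is $0$ (for instance by Corollary~\ref{cor:2.2}, since the hypothesis $\operatorname{Ric}\geq -(n-1)\beta^2$ is vacuous when $n=1$). So the estimate still holds.

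There is no real obstacle here. The only points to check are that the parameter choices satisfy the side conditions $a\geq 0$ and $b^2+c\geq 0$, and that the exponent arithmetic $(n-1)^p\cdot(n-1)^{p/2}=(n-1)^{3p/2}$ is correct.
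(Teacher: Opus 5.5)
Your proof is correct and is exactly the paper's argument: specialize Theorem~\ref{teo_4.1} to $c=0$ and $b=0$, so that $A=(n-1)a^2$ and the bound becomes $(n-1)^{3p/2}a^p/p^p$. Your extra remarks on the degenerate case $a=0$ and on $n=1$ (where $\Sigma$ is isometric to $\mathbb{R}$ and $\lambda_{1,p}(\Sigma)=0$) are also valid; the paper does not address them.
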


\begin{proof}
The result follows directly from Theorem~\ref{teo_4.1} by taking $c=0$ and $b=0$.
\end{proof}

\begin{corollary}\label{cor_4.2}
Let $\Sigma^n$ be a complete noncompact submanifold in $\mathbb{R}^{m}$ with nonnegative scalar curvature. If the mean curvature of $\Sigma^{n}$ satisfies $H\leq b$ for some real number $b$, then
\begin{equation}\label{eq_4.23}
\lambda_{1,p}(\Sigma)\leq\frac{2^{\frac{p}{2}}(n-1)^{p}(n-2)^{\frac{p}{2}} b^p}{p^p}.
\end{equation}
\end{corollary}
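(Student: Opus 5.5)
The plan is to specialize Theorem~\ref{teo_4.1} to $c=0$ and $a=0$. The hypothesis that the scalar curvature is nonnegative gives $R\geq0=-n(n-1)\cdot0^2$, so the scalar-curvature assumption of Theorem~\ref{teo_4.1} holds with $a=0$. The mean curvature bound $H\leq b$ is assumed directly. Since $H\geq0$ and $\Sigma^n$ is noncompact, we may take $b\geq0$; in any case $b^2+c=b^2\geq0$, so the sign condition of Theorem~\ref{teo_4.1} is met.

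Substituting $a=0$ and $c=0$ into the quantity
\[
A=(n-1)a^2+(n-2)b\sqrt{a^2+b^2+c}+(n-2)(b^2+c),
\]
and using $b\geq0$ so that $\sqrt{b^2}=b$, we get
\[
A=(n-2)b\cdot b+(n-2)b^2=2(n-2)b^2.
\]
Hence \eqref{eq_4.16} gives
\[
\lambda_{1,p}(\Sigma)\leq\frac{(n-1)^p}{p^p}\bigl(2(n-2)b^2\bigr)^{p/2}=\frac{2^{p/2}(n-1)^p(n-2)^{p/2}b^p}{p^p},
\]
which is exactly \eqref{eq_4.23}.

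The only point needing care is the sign of $b$. If $b$ were negative, the hypothesis $H\leq b$ would be impossible because $H=|h|\geq0$, so the statement is vacuous in that case. I will therefore assume $b\geq0$, which justifies $\sqrt{b^2}=b$.

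The degenerate cases $n=2$ or $b=0$, where $A=0$, need no separate argument. They are already covered by the $A=0$ branch in the proof of Theorem~\ref{teo_4.1}, which gives $\lambda_{1,p}(\Sigma)=0$, consistent with the right-hand side of \eqref{eq_4.23} being zero. No genuine obstacle is expected; the result is a direct substitution.
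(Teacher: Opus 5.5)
Your proposal is correct and is exactly the paper's argument: the corollary is obtained by substituting $c=0$ and $a=0$ into Theorem~\ref{teo_4.1}, which gives $A=2(n-2)b^2$. Your remarks on the sign of $b$ and on the degenerate case $A=0$ are harmless details that the paper leaves implicit. The case $b<0$ is vacuous because $H=|h|\geq 0$, and noncompactness plays no role in that step.
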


\begin{proof}
The result follows directly from Theorem~\ref{teo_4.1} by taking $c=0$ and $a=0$.
\end{proof}

\begin{remark}
In the particular case $n=2$, Corollary~\ref{cor_4.2} yields
\begin{equation}
\lambda_{1,p}(\Sigma)=0.
\end{equation}
Indeed, in this case, the factor $(n-2)^{p/2}$ vanishes. Equivalently, in Theorem~\ref{teo_4.1}, by taking $a=0$ and $c=0$, we have $A=0$, and hence the argument in the degenerate case gives $\lambda_{1,p}(\Sigma)=0$.
\end{remark}

For the last application, let us consider the product smooth manifold $M^m\times\mathbb{R}$ endowed with the metric
\begin{equation}\label{eq_4.24}
\langle v,w\rangle_{(p,t)}=\langle(\pi_\mathbb{R})_*v, (\pi_\mathbb{R})_*w\rangle_{\mathbb{R}}+\langle(\pi_M)_*v,(\pi_M)_*w\rangle_{M},\quad(p,t)\in M^m\times\mathbb{R},
\end{equation}
$v,w\in T_{(p,t)}( M^m\times\mathbb{R})$, where $\pi_{M}$ and $\pi_{\mathbb{R}}$ denote the projections onto the corresponding factors. Such a space is called a {\em product space}, and in what follows we shall write $ M^m\times\mathbb{R}$ to denote it. Associated with the product structure, we have the following unit vector field
\begin{equation}\label{eq_4.25}
\partial_{t}:=(\partial/\partial_{t})\big|_{(p,t)},\quad(p,t)\in M^m\times\mathbb{R}
\end{equation}

Now, let $\Sigma^{n}$ be a submanifold of $M^{m}\times\mathbb{R}$. Along $\Sigma^n$, the vector field $\partial_t$ decomposes as
\begin{equation}\label{eq_4.26}
\partial_{t}=T+N,
\end{equation}
where $T$ and $N$, denote the tangential and normal part of $\partial_{t}$, respectively. From this,
\begin{equation}\label{eq_4.27}
|T|^{2}+|N|^{2}=1.
\end{equation}

Hence, we have the following theorem.
\begin{theorem}\label{teo_4.5}
Let $M^m\times\mathbb{R}$ be a product space whose Riemannian factor $M^m$ has sectional curvature bounded from below by $c$. Let $\Sigma^n$ be a complete noncompact minimal submanifold of $M^m\times\mathbb{R}$ such that $S\leq b$ for some $b\geq 0$. Then
\begin{equation}
\lambda_{1,p}(\Sigma)
\leq\frac{(n-1)^p}{p^p}\left(\frac{b}{n}\right)^{p/2},\qquad \text{if}\quad c\geq 0,
\end{equation}
and
\begin{equation}
\lambda_{1,p}(\Sigma)\leq\frac{(n-1)^p}{p^p}\left(\frac{b}{n}-c\right)^{p/2},
\qquad\text{if}\quad c<0.
\end{equation}
\end{theorem}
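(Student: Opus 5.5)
The plan is to produce a uniform lower Ricci bound of the form $\operatorname{Ric}\geq-(n-1)\beta^2$ on all of $\Sigma^n$ and then invoke Corollary~\ref{cor:2.2}. The degenerate case $\beta=0$ is handled exactly as in the proof of Theorem~\ref{teo_4.1}: use $\beta=\varepsilon$ and let $\varepsilon\to0$. The bound will come from Lemma~\ref{lemma_4.1}, applied with $H=0$ since $\Sigma^n$ is minimal. It gives
\begin{equation}
{\rm Ric}(X,X)\geq-\frac{n-1}{n}S|X|^2+\sum_{i}\langle\overline{R}(X,e_i)X,e_i\rangle\geq-\frac{n-1}{n}b|X|^2+\sum_{i}\langle\overline{R}(X,e_i)X,e_i\rangle .
\end{equation}
The remaining task is to bound the ambient curvature term from below using the product structure.

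Since $M^m\times\mathbb{R}$ is a Riemannian product with a flat factor, for $\overline{X},\overline{Y}$ tangent we have $\overline{R}(\overline{X},\overline{Y})$ determined by $R_M$ applied to the horizontal projections $X^*=X-\langle X,\partial_t\rangle\partial_t$. Hence
\begin{equation}
\langle\overline{R}(X,e_i)X,e_i\rangle=K_M(X^*,e_i^*)\bigl(|X^*|^2|e_i^*|^2-\langle X^*,e_i^*\rangle^2\bigr),
\end{equation}
with the convention of the paper that this sum is the (sectional-type) curvature contribution to ${\rm Ric}$. The bracket is the squared area of the parallelogram spanned by the projections, which is nonnegative.

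If $c\geq0$, each term is at least $c$ times a nonnegative quantity, so it is $\geq0$. This gives $\operatorname{Ric}\geq-\frac{n-1}{n}b$, that is $\beta^2=b/n$. Corollary~\ref{cor:2.2} then yields $\lambda_{1,p}(\Sigma)\leq\left(\frac{(n-1)}{p}\sqrt{b/n}\right)^p$, which is the first claim.

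If $c<0$, I need an upper bound for $\sum_i(|X^*|^2|e_i^*|^2-\langle X^*,e_i^*\rangle^2)$. Writing $X^*=X-\langle X,T\rangle\partial_t$ and $e_i^*=e_i-\langle e_i,T\rangle\partial_t$ for $X$ tangent, a direct computation gives
\begin{equation}
\sum_i\bigl(|X^*|^2|e_i^*|^2-\langle X^*,e_i^*\rangle^2\bigr)=(n-1-|T|^2)|X|^2-(n-2)\langle X,T\rangle^2\leq(n-1)|X|^2 .
\end{equation}
So the ambient term is $\geq (n-1)c|X|^2$, whence $\operatorname{Ric}\geq-(n-1)\left(\frac{b}{n}-c\right)|X|^2$. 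Taking $\beta^2=\frac{b}{n}-c>0$ in Corollary~\ref{cor:2.2} gives the second estimate.

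The main obstacle is the bookkeeping in the product curvature formula. One has to check the sign convention of $\overline{R}$ as used in \eqref{eq_4.3}, and carry out the expansion of the parallelogram areas correctly. The bound itself only needs nonnegativity of the areas (for $c\ge 0$) and the crude estimate by $(n-1)|X|^2$ (for $c<0$). The latter also follows directly from $|X^*|\le|X|$ and $|e_i^*|\le1$ together with Cauchy–Schwarz, after subtracting the $i$ with $e_i$ parallel to $X$. Choosing the frame with $e_1=X/|X|$ makes that term vanish, so only $n-1$ terms, each at most $|X|^2$, remain.
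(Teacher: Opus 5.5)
Your proposal is correct and follows the paper's proof essentially step for step. You use Lemma~\ref{lemma_4.1} with $H=0$ and $S\le b$, and the product identity $\sum_i|X^*\wedge e_i^*|^2=(n-1-|T|^2)|X|^2-(n-2)\langle X,T\rangle^2$ to bound the ambient term below by $0$ (if $c\ge0$) or by $c(n-1)|X|^2$ (if $c<0$), then apply Corollary~\ref{cor:2.2}, treating $b=0$, $c\ge0$ by the $\varepsilon\to0$ argument; your alternative frame argument with $e_1=X/|X|$ is also valid, since the curvature trace does not depend on the choice of frame.
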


\begin{proof}
Let $\{e_1,\ldots,e_n\}$ be a local orthonormal frame on $\Sigma^n$. Denoting by $K_M$ the sectional curvature of $M^m$, we have
\begin{equation}
\langle \overline{R}(X,e_i)X,e_i\rangle=K_M(X^*,e_i^*)|X^*\wedge e_i^*|_M^2,
\end{equation}
where
\begin{equation}
X^*=X-\langle X,\partial_t\rangle\partial_t\quad\mbox{and}\quad e_i^*=e_i-\langle e_i,\partial_t\rangle\partial_t.
\end{equation}
A direct computation gives
\begin{equation}
\sum_{i=1}^n |X^*\wedge e_i^*|_M^2=(n-1-|T|^2)|X|^2-(n-2)\langle X,T\rangle^2.
\end{equation}
Consequently, since $K_M\geq c$,
\begin{equation}
\sum_{i=1}^n
\langle \overline{R}(X,e_i)X,e_i\rangle\geq
\begin{cases}
0, & c\geq 0,\medskip\\
c(n-1)|X|^2, & c<0.
\end{cases}
\end{equation}
Since $\Sigma^n$ is minimal, $H=0$. Hence, by Lemma~\ref{lemma_4.1} and the assumption $S\leq b$, if $c\geq0$ we obtain
\begin{equation}
\operatorname{Ric}(X,X)\geq-(n-1)\frac{b}{n}|X|^2,
\end{equation}
whereas, if $c<0$,
\begin{equation}
\operatorname{Ric}(X,X)\geq-(n-1)\left(\frac{b}{n}-c\right)|X|^2.
\end{equation}
Suppose first that $c\geq0$ and $b>0$. By taking $\beta^2=\frac{b}{n}$, Corollary~\ref{cor:2.2} gives
\begin{equation}
\lambda_{1,p}(\Sigma)\leq\frac{(n-1)^p}{p^p}\left(\frac{b}{n}\right)^{p/2}.
\end{equation}
If $b=0$, then $\operatorname{Ric}\geq0$. Applying Corollary~\ref{cor:2.2} with an arbitrary $\varepsilon>0$ in place of $\beta$ and letting $\varepsilon\to0$, we obtain
\begin{equation}
\lambda_{1,p}(\Sigma)=0.
\end{equation}
If $c<0$, then $\frac{b}{n}-c>0$. Taking $\beta^2=\frac{b}{n}-c$ and applying Corollary~\ref{cor:2.2}, we obtain
\begin{equation}
\lambda_{1,p}(\Sigma)\leq\frac{(n-1)^p}{p^p}\left(\frac{b}{n}-c\right)^{p/2}.
\end{equation}
\end{proof}

We close this section with the following consequence.
\begin{corollary}\label{cor_4.6}
Let $\Sigma^{n}$ be a complete noncompact minimal submanifold of $\mathbb{H}^{m}\times\mathbb{R}$. If $S\leq b$ for some constant $b>0$, then
\begin{equation}
\lambda_{1,p}(\Sigma)\leq\dfrac{(n-1)^{p}}{p^{p}}\left(\dfrac{b}{n}+1\right)^{p/2}.
\end{equation}
\end{corollary}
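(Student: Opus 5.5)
This corollary is the special case of Theorem~\ref{teo_4.5} in which the Riemannian factor is $M^m=\mathbb{H}^m$, the hyperbolic space of constant sectional curvature $-1$. The plan is to check that the hypotheses of Theorem~\ref{teo_4.5} hold and then read off the relevant branch of its conclusion.

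First, $\mathbb{H}^m$ has constant sectional curvature $-1$. Hence its sectional curvature is bounded from below by $c=-1$, and $\mathbb{H}^m\times\mathbb{R}$ is a product space in the sense of \eqref{eq_4.24}. The submanifold $\Sigma^n$ is complete, noncompact and minimal, and it satisfies $S\leq b$ with $b>0$ (in particular $b\geq0$). These are exactly the hypotheses of Theorem~\ref{teo_4.5}.

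Since $c=-1<0$, we are in the second case of Theorem~\ref{teo_4.5}. There $\frac{b}{n}-c=\frac{b}{n}+1>0$, so the theorem gives
\begin{equation}
\lambda_{1,p}(\Sigma)\leq\frac{(n-1)^p}{p^p}\left(\frac{b}{n}+1\right)^{p/2},
\end{equation}
which is the asserted estimate.

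The only things to keep track of are the standing assumption $p\geq2$, inherited from Corollary~\ref{cor:2.2} through Theorem~\ref{teo_4.5}, and the sign convention for $c$. With $c=-1$, the Ricci lower bound used in the proof of Theorem~\ref{teo_4.5} becomes $\operatorname{Ric}\geq-(n-1)(\frac{b}{n}+1)$. This lets us apply Corollary~\ref{cor:2.2} with $\beta^2=\frac{b}{n}+1>0$, so no degenerate case arises.
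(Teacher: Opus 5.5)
Your proposal is correct and follows the paper's proof: specialize Theorem~\ref{teo_4.5} to $M^m=\mathbb{H}^m$ with $c=-1$, which puts you in the $c<0$ branch with $\frac{b}{n}-c=\frac{b}{n}+1>0$. Your remarks on the implicit assumption $p\geq2$ and on the nondegenerate value $\beta^2=\frac{b}{n}+1$ are accurate but not needed beyond this one-line specialization.
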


\begin{proof}
The result follows directly from Theorem~\ref{teo_4.5} by taking $c=-1$.
\end{proof}

\section*{Acknowledgments}
The first author is partially supported by CNPq, Brazil, grant 303311/2025-8 and Propesqi (UFPE). The second author is partially supported by CNPq, Brazil.

\end{document}